\newtheorem{theorem}{Theorem}
\newtheorem{corollary}{Corollary}
\newtheorem*{definition*}{Definition}
\newtheorem{remark}{Remark}
\DeclareMathOperator{\argmin}{argmin} 
\DeclareMathOperator{\rank}{rank}
\title{Exact and Heuristic Algorithms 
\\ for Semi-Nonnegative Matrix Factorization}
\date{}
\author{Nicolas Gillis \\ 
Department of Mathematics and Operational Research \\ 
Facult\'e Polytechnique, Universit\'e de Mons \\ 
Rue de Houdain 9, 7000 Mons, Belgium\\
 nicolas.gillis@umons.ac.be  
 \and 
  Abhishek Kumar  \\ 
 IBM T.J. Watson Research Center  \\ 
Yorktown Heights, NY 10598  \\ 
abhishk@us.ibm.com
}
\begin{document}

\maketitle

\begin{abstract}
Given a matrix $M$ (not necessarily nonnegative) and a factorization rank $r$, semi-nonnegative matrix factorization (semi-NMF) looks for a matrix $U$ with $r$ columns and a nonnegative matrix $V$ with $r$ rows such that $UV$ is the best possible approximation of $M$ according to some metric.  
In this paper, we study the properties of semi-NMF from which we develop exact and heuristic algorithms. 
Our contribution is threefold. 
First, we prove that the error of a semi-NMF of rank $r$ has to be smaller than the best unconstrained approximation of rank $r-1$. This leads us to a new initialization procedure based on the singular value decomposition (SVD) with a guarantee on the quality of the approximation.  
Second, we propose an exact algorithm (that is, an algorithm that finds an optimal solution), also based on the SVD, for a certain class of matrices (including nonnegative irreducible matrices) from which we derive an initialization for matrices not belonging to that class. 
Numerical experiments illustrate that this second approach performs extremely well, and allows us to compute optimal semi-NMF decompositions in many situations.  
Finally, we analyze the computational complexity of semi-NMF proving its NP-hardness, already in the rank-one case (that is, for $r = 1$), and we show that semi-NMF is sometimes ill-posed (that is, an optimal solution does not exist).  
\end{abstract}

\textbf{Keywords.} semi-nonnegative matrix factorization, semi-nonnegative rank, initialization, algorithms.

\section{Introduction}

Semi-nonnegative matrix factorization (semi-NMF) can be defined as follows: 
Given a matrix $M \in \mathbb{R}^{m \times n}$ and a factorization rank $r$, solve
\begin{equation} \label{semiNMF}
\min_{U \in \mathbb{R}^{m\times r}, V \in \mathbb{R}^{r \times n}} ||M-UV||_F^2 
\quad \text{ such that }\quad
V \geq 0, 
\end{equation}
 where $||.||_F$ is the Frobenius norm and $V \geq 0$ means that $V$ is component-wise nonnegative. 
 Note that any other suitable metric could be used but we focus in this paper on this particular objective function. 
 Semi-NMF has been used in the context of data analysis and clustering 
\cite{DTJ10}. In fact, letting each column of the input matrix represent an element of a data set (there are $n$ elements in dimension~$m$), 
the semi-NMF decomposition can be equivalently written as 
\[
M(:,j) \approx \sum_{k=1}^r U(:,k) V(k,j) \qquad \text{ for all $j$}, 
\]
so that each column of $M$ is a conic combination of the columns of $U$ since $V \geq 0$.  
Each column of $U$ can then be interpreted as a cluster centroid 
while the columns of $V$ are the weights needed to reconstruct approximately each column of $M$ using the columns of $U$ and hence can be interpreted as cluster membership indicators; see the discussion in \cite{DTJ10}. 
Semi-NMF has been used successfully for example for motion segmentation with missing data~\cite{MD12}, image super-resolution~\cite{BRG12} or hyperspectral unmixing~\cite{TCI12}.  

Let us define the semi-nonnegative rank of matrix $M$, denoted $\rank_s(M)$, as the smallest $r$ such that there exists $U \in \mathbb{R}^{m\times r}$ and $V \in \mathbb{R}^{r \times n}$ with $M = UV$ and $V \geq 0$. 
Let us also define \emph{exact semi-NMF}, a problem closely related to semi-NMF, as follows: 
Given a matrix $M \in \mathbb{R}^{m \times n}$, compute its semi-nonnegative rank $r_s = \rank_s(M)$ 
and a corresponding factorization $U \in \mathbb{R}^{m\times r_s}$ and $V \in \mathbb{R}^{r_s \times n}$ such that $M = UV$ and $V \geq 0$.  
We also denote $\rank(M)$ the usual rank of a matrix $M$, and $\rank_+(M)$ its nonnegative rank which is the smallest $k$ such that there exists $U \in \mathbb{R}^{m \times k}_+$ and   $V \in \mathbb{R}^{k \times n}_+$ with $M = UV$; see, e.g., \cite{GG10b} and the references therein. 
By definition, we have 
\[
\rank(M) \leq \rank_s(M) \leq \rank_+(M). 
\] 
The paper is organized as follows. 
\begin{itemize}

\item In Section~\ref{sec2}, we prove that the error of a semi-NMF of rank $r$ has to be smaller than the best unconstrained approximation\footnote{
In the remainder of the paper, unless stated otherwise, we will refer to the best rank-$r$ approximation $X$ of $M$ as an optimal solution of $\min_{X, \rank(X) \leq r} ||M-X||_F^2$. 
Note that the best rank-$r$ unconstrained approximation of a given matrix is not necessarily unique. In fact, it is if and only if the $r$th and $(r+1)$th singular values are distinct; see, e.g., \cite{GV96}. 
However, we will use in this paper this abuse of language as the non-uniqueness issue does not play a role in our developments. 
} 
of rank $r-1$ (Theorem~\ref{th1}), which implies \mbox{$\rank_s(M) \leq \rank(M)+1$}. This leads us to a new initialization procedure for semi-NMF based on the singular value decomposition (SVD) with a guarantee on the quality of the approximation (Algorithm~\ref{svdinit}). 

\item In Section~\ref{sec3}, we prove that solving exact semi-NMF can be done in polynomial time (Theorem~\ref{th3}). 
In particular, we show that $\rank_s(M) = \rank(M)$ if and only if a positive vector belongs to the row space of $M$ (after having removed its zero columns), otherwise $\rank_s(M) = \rank(M)+1$ (Theorem~\ref{th2}). 
We propose an algorithm that solves semi-NMF~\eqref{semiNMF} for a certain class of matrices (which includes nonnegative matrices $M$ for which $M^T M$ is irreducible) and requires one SVD computation (Algorithm~\ref{exactseminmf}). We also generalize this algorithm for matrices not belonging to that class, and, in Section~\ref{sec5}, we show that it performs extremely well, often leading to optimal solutions of semi-NMF~\eqref{semiNMF}. 

\item In Section~\ref{sec4}, we prove that semi-NMF is NP-hard already for $r=1$ (Theorem~\ref{th5}). 
In light of the results above, this shows that computing (approximate) semi-NMF is much more difficult than computing exact semi-NMF (unless $P = NP$). Moreover, we also show that semi-NMF is sometimes ill-posed (that is, an optimal solution does not exist).  

\end{itemize}

\begin{remark}
While finishing up this paper, we noticed the very recent paper \cite{CHB15} (available online 17 October 2014). 
It treats the exact semi-NMF problem, and studies Theorems~\ref{th1} and \ref{th2} of this paper\footnote{Note however that the case of matrices with zero columns is not properly treated in \cite{CHB15}; see Theorem~\ref{th2}.}. 


Our contribution goes further than proving Theorems~\ref{th1} and \ref{th2}, and is rather oriented towards algorithmic aspects:
 we propose 
(i) a polynomial-time algorithm for exact semi-NMF, and 
(ii) a very efficient way to initialize semi-NMF algorithms which is provably optimal for a subclass of matrices; 
see Algorithm~\ref{exactseminmf}.  
Moreover, we prove NP-hardness and ill-posedness of semi-NMF; see Section~\ref{sec4}. 
\end{remark}

\section{Semi-NMF based on Unconstrained Low-Rank Approximations} \label{sec2}

Given any rank-$r$ factorization $(A,B)$ of a matrix $M = AB$, an exact rank-$2r$ semi-NMF can be constructed since 
\[
M 
= AB 
= A (B_+ - B_-) 
= \left[ A, \; -A \right] 
\left[ \begin{array}{c} B_+ \\  B_- \end{array} \right], 
\]
where $B_+ = \max(B,0) \geq 0$ and $B_- = \max(-B,0) \geq 0$ so that $B = B_+ - B_-$. This implies 
\begin{equation} \label{2rank} 
\rank_s(M) \leq 2 \rank(M). 
\end{equation}

\subsection{Tight Upper Bound based on the Usual Rank}

A much better bound than \eqref{2rank} based on the usual rank can be derived. In fact, we now show that any factorization of rank $k$ can be transformed into a semi-NMF of rank $k+1$. 

\begin{theorem}[see also \cite{CHB15}, Lem.~1] \label{th1}
Let $A \in \mathbb{R}^{m \times k}$ and $B \in \mathbb{R}^{k \times n}$. Then, there exists $U \in \mathbb{R}^{m \times (k+1)}$ and $V \in \mathbb{R}^{(k+1) \times n}$ such that $V \geq 0$ and  $UV = AB$. 
\end{theorem}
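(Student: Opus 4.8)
The plan is to take an arbitrary rank-$k$ factorization $M = AB$ and shift the rows of $B$ by a common vector so that they become nonnegative, compensating the shift by appending one extra column to $A$. Concretely, since $B \in \mathbb{R}^{k \times n}$ has only finitely many entries, each column $B(:,j)$ is bounded below; let $c \in \mathbb{R}^k$ be a vector with $c_i \leq \min_j B(i,j)$ for every $i$, for instance $c_i = \min_j B(i,j)$ itself (or any more negative value). Write $e = (1,1,\dots,1)^T \in \mathbb{R}^n$, so that $c\, e^T \in \mathbb{R}^{k \times n}$ is the matrix each of whose columns equals $c$. Then define
\[
V = \begin{bmatrix} B - c\, e^T \\ e^T \end{bmatrix} \in \mathbb{R}^{(k+1) \times n},
\qquad
U = \bigl[\, A \ \big| \ A c \,\bigr] \in \mathbb{R}^{m \times (k+1)}.
\]

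The verification is then two short checks. First, $V \geq 0$: the last row is $e^T \geq 0$, and the first $k$ rows are $B - c\,e^T$, whose $(i,j)$ entry is $B(i,j) - c_i \geq 0$ by the choice of $c$. Second, $UV = AB$: expanding the block product gives $UV = A(B - c\,e^T) + (Ac) e^T = AB - A c\, e^T + A c\, e^T = AB = M$. Both steps are routine; the only mild subtlety is ensuring the ``shift'' is by a single vector applied uniformly to all columns (so that it can be undone by one rank-one correction $Ac\, e^T$), which is exactly why $c$ is taken as a column vector and the shift matrix factors as $c\, e^T$.

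There is essentially no hard part here: the construction is explicit and the dimensions work out immediately ($U$ has $k+1$ columns, $V$ has $k+1$ rows, $V \geq 0$). The one point worth a sentence in the write-up is that $\min_j B(i,j)$ is attained because $n$ is finite, so $c$ is well-defined; no compactness or limiting argument is needed. Applied with $(A,B)$ a rank-$k$ exact factorization of $M$ with $k = \rank(M)$, this immediately yields $\rank_s(M) \leq \rank(M) + 1$, as claimed in the surrounding discussion.
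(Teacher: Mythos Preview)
Your proof is correct. Both your argument and the paper's rest on the same idea---make $B$ nonnegative by a shift and undo that shift with one extra rank-one term---but the two constructions are mirror images of each other. The paper shifts each \emph{column} of $B$ by a column-dependent scalar times the all-ones vector: it sets $U=[A,\,-Ae]$ (so the columns of $U$ sum to zero) and, for each $j$, $V(:,j)=\bigl(\begin{smallmatrix}B(:,j)\\0\end{smallmatrix}\bigr)+\max\bigl(0,\max_i(-B_{ij})\bigr)\,e$. You instead shift each \emph{row} of $B$ by a row-dependent constant $c_i$, append the constant row $e^T$ at the bottom of $V$, and take the extra column of $U$ to be $Ac$. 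A small practical difference: in the paper's version the extra column of $U$ is $-Ae$, independent of $B$, while in yours it is $Ac$, which depends on $B$; conversely, your last row of $V$ is a fixed $e^T$, whereas the paper's is data-dependent. Either construction yields the corollary $\rank_s(M)\le\rank(M)+1$ immediately.
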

\begin{proof}
Let us define $\bar{a} = -\sum_i A(:,i) = - Ae$ where $e$ is the vector of all ones of appropriate dimensions. 
Let also 
\[
U = [A \; \bar{a}] 
\qquad 
\text{ and } 
\qquad  
V(:,j) = 
\left( \begin{array}{c} B(:,j) \\ 0 \end{array} \right) 
 + \max\left(0, \max_i(-B_{ij}) \right) e 
\; \in \;  \mathbb{R}^{k+1}_+   , 
\]
for all $1 \leq j \leq n$. We have for all $j$ that 
\begin{align*}
UV(:,j) & = [A \, \bar{a}] \left[ \left( \begin{array}{c} B(:,j) \\ 0 \end{array} \right) 
 + \max\left(0, \max_i(-B_{ij}) \right) e  \right] \\
& = A B(:,j) + \max\left(0, \max_i(-B_{ij}) \right) [A \, \bar{a}]  e = A B(:,j),  
\end{align*}
since $[A \, \bar{a}]  e = 0$ by construction. 
\end{proof}


Theorem~\ref{th1} can be geometrically interpreted as follows: \emph{any set of data points in a $r$-dimensional space can be enclosed in the convex hull of $r+1$ vertices}.  
For example, any set of points in a two-dimensional affine subspace is enclosed in a triangle (the triangle just needs to be big enough to contain all data points); see \cite[Section 4]{CHB15} for more details.

\begin{corollary} \label{cor1}  
For any matrix $M$, we have 
\[
\rank(M) \leq \rank_s(M) \leq \rank(M) + 1. 
\]
\end{corollary}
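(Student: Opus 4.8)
The plan is to read off both inequalities, the left one from the definitions and the right one as an immediate specialization of Theorem~\ref{th1}. The lower bound $\rank(M) \leq \rank_s(M)$ needs no new argument: it was already recorded in the introduction, and it follows because any exact semi-NMF $M = UV$ with $U$ having $r$ columns is in particular a rank-$r$ factorization of $M$, so $\rank(M) \leq r$; taking the infimum over all admissible $r$ (those for which a nonnegative $V$ exists) gives $\rank(M) \leq \rank_s(M)$.

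For the upper bound, I would set $k = \rank(M)$ and choose any rank factorization $M = AB$ with $A \in \mathbb{R}^{m \times k}$ and $B \in \mathbb{R}^{k \times n}$ (such a factorization exists by definition of the rank). Applying Theorem~\ref{th1} to this pair $(A,B)$ yields $U \in \mathbb{R}^{m \times (k+1)}$ and $V \in \mathbb{R}^{(k+1) \times n}$ with $V \geq 0$ and $UV = AB = M$. This exhibits an exact semi-NMF of $M$ with inner dimension $k+1$, so by definition of the semi-nonnegative rank, $\rank_s(M) \leq k+1 = \rank(M) + 1$.

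I do not expect any genuine obstacle: all the work is already contained in Theorem~\ref{th1}, and the corollary is just its instantiation at a minimal-rank factorization, combined with the trivial lower bound. The only point worth a remark is the degenerate case $M = 0$, where $\rank(M) = 0$; there the bound holds trivially, since $\rank_s(0) = 0$ as well. Hence the chain $\rank(M) \leq \rank_s(M) \leq \rank(M) + 1$ holds for every matrix $M$.
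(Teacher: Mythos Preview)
Your proposal is correct and follows exactly the paper's approach: the paper's own proof is the single line ``This follows from Theorem~\ref{th1},'' and your argument simply spells out that instantiation together with the trivial lower bound already recorded in the introduction. There is nothing to add or fix.
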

\begin{proof} 
This follows from Theorem~\ref{th1}. 
\end{proof}

This implies that either $\rank_s(M) = \rank(M)$ or  $\rank_s(M) = \rank(M) + 1$. 
Observe that  

\begin{itemize}
\item The above bound is tight. For example, the matrix
\[
M = \left( \begin{array}{ccc} 1 & 0 & -1 \\ 0 & 1 & -1 \end{array} \right) 
\] 
satisfies $\rank_s(M) = \rank(M) + 1 = 3$ (because the cone spanned by the columns of $M$, namely $\mathbb{R}^2$, 
cannot be represented as a cone spanned by two vectors). 

\item When $n$ is large ($n \gg \rank(M) = r$), in general, $\rank_s(M) = r + 1$. 
In fact, it is not likely for a set of $n$ points in a $r$-dimensional space to be spanned by a cone with $r$ rays when $n \gg r$.  
This would require these vectors to be contained in the same half space (see Section~\ref{sec3} for a complete characterization). 
For example, if we generate these vectors uniformly at random on the unit disk, the probability for these vectors to be in the same half space goes to zero extremely fast as $n$ grows.

\item The function $\rank_s(.)$ is not invariant under transposition, that is, $\rank_s(M)$ is not necessarily equal to $\rank_s(M^T)$, 
although they cannot differ by more than one (see Corollary~\ref{cor1}). 
For example, the matrix
\[
M 
= 
\left( \begin{array}{ccc} -1 & 0 & -1 \\ 0 & -1 & -1 \\ 1 & 1 & 2 \end{array} \right) 
=
\left( \begin{array}{cc} -1 & 0  \\ 0 & -1  \\ 1 & 1 \end{array} \right) 
\left( \begin{array}{ccc} 1 & 0 & 1 \\ 0 & 1 & 1  \end{array} \right) 
\]
satisfies $\rank_s(M) = 2 \neq \rank_s(M^T) = 3$. 

\end{itemize}


\subsection{Algorithm for Semi-NMF} 

A simple yet effective algorithm for semi-NMF is a block coordinate descent method that alternatively optimizes over $U$ for $V$ fixed and over $V$ for $U$ fixed: 
\begin{itemize}
\item  The problem in $U$ is an unconstrained least squares and can be solved with dedicated solvers. 
\item  The problem in $V$ is a nonnegative least squares problem. To solve this problem, we propose to use a block coordinate descent method on the rows of $V$ since the optimal solution for a given row (all other rows being fixed) has a closed-form solution; 
see, e.g., \cite{GG12} and the references therein.  
\end{itemize}
Algorithm~\ref{cdsemi} implements this strategy 
and is guaranteed to converge to a stationary point of \eqref{semiNMF} 
because each block of variables is optimized exactly and achieves a unique global minimizer\footnote{Given that $U$ and $V$ remain full rank.}~\cite{B99, B99b} (Prop.~2.7.1). (Note that a value of maxiter between 100 and 500 usually gives good results, although this depends on the initialization and the dimensions $m$, $n$ and $r$; see Section~\ref{sec5} for some numerical experiments.)
\algsetup{indent=2em}
\begin{algorithm}[ht!]
\caption{Coordinate Descent for Semi-NMF \label{cdsemi}}
\begin{algorithmic}[1] 
\REQUIRE A matrix $M \in \mathbb{R}^{m \times n}$, an initialization $V \in \mathbb{R}^{r \times n}_+$, a maximum number of iterations maxiter. 
\ENSURE A rank-$r$ semi-NMF $(U,V)$ of $M \approx UV$ with $V \geq 0$. 
    \medskip  

\FOR{$i$ = 1 : maxiter}
	\STATE  $U \leftarrow \argmin_{X \in \mathbb{R}^{m \times r}} ||M - XV||_F^2$ \quad ($= M/V$ in Matlab) 
	\STATE \emph{\% Coordinate descent on the rows of $V$}
	\FOR{$i$ = 1 : $r$} 
		\STATE 
		\begin{align*} V(i,:)^T 	
		& \leftarrow \argmin_{x \in \mathbb{R}^{n}_+} ||M - U(:,\mathcal{I})V(\mathcal{I},:) - U(:,i) x^T||_F^2 \\ 
		& = \max\left(0, \frac{ \left( M - U(:,\mathcal{I})V(\mathcal{I},:) \right)^T U(:,i) }{||U(:,i)||_2^2} \right), \mathcal{I} = \{1,\dots,r\} \backslash \{i\}. 
		\end{align*}
\ENDFOR
\ENDFOR
\end{algorithmic}  
\end{algorithm} 

\begin{remark}[Original Semi-NMF Algorithm] \label{algopami}
In the original paper introducing semi-NMF~\cite{DTJ10}, the proposed algorithm is the following: 
\begin{itemize}  
\item the matrices $U$ and $V$ are initialized using k-means: the columns of $U$ are taken as the cluster centroids of the columns of $M$, while $V$ is the binary indicator matrix to which the constant 0.2 is added 
(for the multiplicative updates to be able to modify all entries of $V$; see below). 

\item $V$ is updated using the following multiplicative updates: for all $k,j$, 
\[
V_{kj} 
\leftarrow 
V_{kj} 
\; 
\sqrt{ \frac{  \max(0,(U^T M)_{kj}) + \max(0,-(U^T U V)_{kj}) }
{ \max(0,-(U^T M)_{kj}) + \max(0,(U^T U V)_{kj}) } } . 
\]
These updates are guaranteed to decrease the objective function. 

\item $U$ is updated as in Algorithm~\ref{cdsemi}, using the optimal solution for $V$ fixed. 

\end{itemize}  
Hence this algorithm is rather similar to Algorithm~\ref{cdsemi}, where $V$ would be initialized with k-means and would be updated with the multiplicative updates. 
However, compared to Algorithm~\ref{cdsemi}, the algorithm from~\cite{DTJ10} suffers from the following drawbacks: 
\begin{itemize}  
\item It is not guaranteed to converge to a stationary point (nonincreasningness is not a sufficient condition). 

\item It has a locking phenomenon: once an entry of matrix $V$ is set to zero, it cannot be modified (because of the multiplicative nature). 

\item It sometimes runs into numerical problems, because the denominator in the update rules is equal to zero.

\item Although it has almost exactly the same computational cost as Algorithm~\ref{cdsemi} 
(the update of $V$ requires the matrix products $U^T M$ and $U^T U V$ in both cases), 
it converges significantly slower. The same observation was made by several works comparing coordinate descent approaches to multiplicative updates for optimizing $U$ and $V$ in NMF~\cite{CP09b, LZ09, HD11, LW12, GG12}. 

\end{itemize} 
Moreover, in this paper, our goal is not to compare strategies to update matrices $U$ and $V$ but rather to compare initialization strategies. For these reasons, we do not use the algorithm from~\cite{DTJ10} in this paper, but we will compare their initialization based on $k$-means to our proposed approaches; see Section~\ref{sec5}.    
\end{remark}

\subsection{SVD-based Initialization} \label{sec23}

We can use the construction of Theorem~\ref{th1} to initialize semi-NMF algorithms such as Algorithm~\ref{cdsemi}; 
see Algorithm~\ref{svdinit}.  
Given a rank-$r$ approximation $(A,B)$ of $M \approx AB$ computed via the truncated SVD, we flip the sign of the rows of $B$ (and the columns of $A$ accordingly) so that the minimum on each row of $B$ is maximized. (Note that other sign permutations exist; see, e.g.,~\cite{BAK08}.) 
The motivation behind this choice is to reduce the effect of the correction done at step 5 of Algorithm~\ref{svdinit}. 
We have the following result: 
\begin{corollary} Let $M \in  \mathbb{R}^{m \times n}$, and let $M_{r-1}$ be its best rank-$(r-1)$ approximation with respect to the norm $||.||$,  
then 
\begin{equation} \label{ubopt}
\min_{U \in \mathbb{R}^{m \times r}, V \in \mathbb{R}^{r \times n}_+} ||M - UV|| 
\leq 
||M - M_{r-1}|| .
\end{equation}
The solution provided by Algorithm~\ref{cdsemi} initialized with Algorithm~\ref{svdinit} satisfies this bound for the Frobenius norm. 
\end{corollary}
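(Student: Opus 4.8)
The plan is to prove the two assertions of the corollary in turn, both being essentially immediate consequences of Theorem~\ref{th1} together with the optimality of the SVD.

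\emph{Proof of the bound \eqref{ubopt}.} First I would invoke the truncated SVD: let $M_{r-1} = AB$ be a best rank-$(r-1)$ approximation of $M$, so that $A \in \mathbb{R}^{m \times (r-1)}$ and $B \in \mathbb{R}^{(r-1) \times n}$. By Theorem~\ref{th1} applied with $k = r-1$, there exist $U \in \mathbb{R}^{m \times r}$ and $V \in \mathbb{R}^{r \times n}$ with $V \geq 0$ and $UV = AB = M_{r-1}$. Hence this particular $(U,V)$ is feasible for the semi-NMF problem on the left-hand side of \eqref{ubopt}, and it achieves objective value $\|M - UV\| = \|M - M_{r-1}\|$. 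Taking the minimum over all feasible $(U,V)$ can only decrease this value, which yields \eqref{ubopt}. Note that the argument works verbatim for any unitarily invariant norm $\|\cdot\|$ for which the SVD gives a best rank-$(r-1)$ approximation (in particular the Frobenius and spectral norms), which is why the statement is phrased for a general $\|\cdot\|$.

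\emph{Proof that Algorithm~\ref{cdsemi} initialized with Algorithm~\ref{svdinit} satisfies the bound for the Frobenius norm.} Here I would argue in two steps. First, Algorithm~\ref{svdinit} produces exactly an initialization of the form given by the construction in Theorem~\ref{th1} (applied to the rank-$(r-1)$ truncated SVD factors $(A,B)$, after the harmless sign flips), so the initial pair $(U_0, V_0)$ satisfies $U_0 V_0 = M_{r-1}$ with $V_0 \geq 0$, and therefore $\|M - U_0 V_0\|_F = \|M - M_{r-1}\|_F$. Second, Algorithm~\ref{cdsemi} is a block coordinate descent method in which each of the two blocks is optimized exactly at every iteration; consequently the objective value $\|M - UV\|_F^2$ is nonincreasing along the iterates. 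Hence the value returned by Algorithm~\ref{cdsemi} is at most its value at the initialization, which is $\|M - M_{r-1}\|_F$, giving the claimed bound.

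The only subtlety — and the main thing to check carefully rather than a real obstacle — is that Step~5 of Algorithm~\ref{svdinit} indeed implements the correction from the proof of Theorem~\ref{th1} (appending the column $\bar a = -Ae$ to $A$ and shifting each column of $V$ by $\max(0, \max_i(-B_{ij}))e$), so that $U_0 V_0 = M_{r-1}$ exactly; once this identification is made, the sign-flipping step is irrelevant to the bound (it only affects the magnitude of the shift, hence practical performance), and the monotonicity of Algorithm~\ref{cdsemi} does the rest. I would therefore keep the proof short: cite Theorem~\ref{th1} for feasibility and the SVD optimality for the value, then cite the exact block-minimization / monotonicity property of Algorithm~\ref{cdsemi}.
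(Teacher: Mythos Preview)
Your proposal is correct and follows exactly the same route as the paper: invoke Theorem~\ref{th1} to exhibit a feasible rank-$r$ semi-NMF achieving the error of $M_{r-1}$, and then use the monotonicity of the block coordinate descent in Algorithm~\ref{cdsemi} for the second claim. The paper's proof is a one-liner citing precisely these two facts; your version simply spells out the details (the factorization of $M_{r-1}$, the identification of Algorithm~\ref{svdinit} with the construction in Theorem~\ref{th1}, and the nonincreasing objective), which is fine but could be compressed.
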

\begin{proof} 
This follows directly from Theorem~\ref{th1}, and the fact that Algorithm~\ref{cdsemi} generates a sequence of iterates monotonically decreasing the objective function. 
\end{proof} 

\algsetup{indent=2em}
\begin{algorithm}[ht!]
\caption{SVD-based Initialization for Semi-NMF (see Theorem~\ref{th1}) \label{svdinit}}
\begin{algorithmic}[1] 
\REQUIRE A matrix $M \in \mathbb{R}^{m \times n}$ and a factorization rank $r$. 
\ENSURE A rank-$r$ semi-NMF $(U,V)$ of $M \approx UV$ with $V \geq 0$ achieving the same error than the best rank-$(r-1)$ approximation of $M$. 
    \medskip   
\STATE $[A,S,B^T] =$ svds$(M,r-1)$ ; \% \emph{See the Matlab function \texttt{svds} }
\STATE $A = AS$;  
\STATE For each $1 \leq i \leq r-1$: multiply $B(i,:)$ and $A(:,i)$ by $-1$ if $\min_j B(i,j) \leq \min_j (-B(i,j))$ ; 
\STATE $U = \left[A \; \; -Ae\right]$; 
\STATE $V(:,j) =  \left( \begin{array}{c} B(:,j) \\ 0 \end{array} \right) 
 + \max\left(0, \max_i(-B_{ij}) \right) e 
\qquad 1 \leq j \leq n$. 
\end{algorithmic}  
\end{algorithm}

In Section~\ref{sec5}, we will compare Algorithm~\ref{svdinit} with several other initialization strategies. It turns out that, although it is an appealing solution from a theoretical point of view
(as it guarantees a solution with error equal to the error of the best rank-$(r-1)$ approximation of $M$), 
it performs relatively poorly, in most cases worse than random initializations. 

\section{Exact Algorithm for Semi-NMF} \label{sec3}

In the previous section, we showed that for any matrix $M$, $\rank_s(M)$ equals $\rank(M)$ or $\rank(M)+1$. 
In this section, we first completely characterize these two cases, and derive an algorithm to solve the exact semi-NMF problem.  
\begin{theorem}[\footnote{This result is very similar to \cite[Th.3]{CHB15}. However, the case of matrices with zero columns is not treated properly in \cite[Th.3]{CHB15}.  For example, according to \cite[Th.3]{CHB15}, $\rank(0) \neq \rank_s(0)$ which is incorrect. 
In fact, the authors claim that: 
`As a consequence, $B$ contains a zero column which contradicts the fact that $\rank(B) = r$'. This is not true: if $n > r$ (which is usually the case since $r = \rank(M) \leq n$), $B$ can contain zero columns while $\rank(B) = r$.}] \label{th2}
Let $M \in \mathbb{R}^{m \times n}$. The following statements are equivalent
\begin{enumerate}
\item[(i)] $\rank(M) = \rank_s(M)$. 

\item[(ii)] There exists a non-zero vector $z \in \mathbb{R}^m$ such that $M(:,j)^T z > 0$ for all $j$ such that $M(:,j) \neq 0$. 
In other terms, all non-zero columns of $M$ belong to the interior of a half space $\mathcal{P}_z = \{ x \in \mathbb{R}^m \ | \ x^T z \geq 0 \}$ for some $z \neq 0$, or, equivalently, there exists a positive vector 
in the rows space of $M$  after its zero columns have been removed.  


\item[(iii)] Given any factorization $(A,B) \in \mathbb{R}^{m \times r} \times \mathbb{R}^{r \times n}$ of $M = AB$ with $r = \rank(M)$, 
all non-zero columns of $B$ belong to the interior of a half space $\mathcal{P}_y$ for some $y \neq 0$. 

\end{enumerate}
\end{theorem}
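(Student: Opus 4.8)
The plan is to establish the cycle $(iii)\Rightarrow(i)\Rightarrow(iii)$ together with the equivalence $(ii)\Leftrightarrow(iii)$. The reformulations inside statement~$(ii)$ are immediate: $x$ lies in the interior of $\mathcal{P}_z$ exactly when $x^Tz>0$, and a positive vector in the row space of $M$ with its zero columns removed is precisely a vector whose entries are $M(:,j)^Tz>0$ for the indices $j$ with $M(:,j)\neq 0$. Throughout I would use the elementary fact that if $M=AB$ with $A\in\mathbb{R}^{m\times r}$, $B\in\mathbb{R}^{r\times n}$ and $r=\rank(M)$, then necessarily $\rank(A)=\rank(B)=r$; in particular $A^T\colon\mathbb{R}^m\to\mathbb{R}^r$ is surjective and $A$ has trivial kernel, so $M(:,j)=0\iff B(:,j)=0$. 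The degenerate case $M=0$ (where $r=0$) makes all three statements vacuously true and can be set aside.

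\textbf{$(ii)\Leftrightarrow(iii)$.} Fix any factorization $(A,B)$ with $r=\rank(M)$ columns and rows, and substitute $y=A^Tz$. For every column one has $M(:,j)^Tz=B(:,j)^Ty$, and $M(:,j)=0\iff B(:,j)=0$, so ``$M(:,j)^Tz>0$ whenever $M(:,j)\neq 0$'' is literally the same condition as ``$B(:,j)^Ty>0$ whenever $B(:,j)\neq 0$''. Surjectivity of $A^T$ lets one pass freely between a nonzero $z$ and a nonzero $y$: a nonzero $y$ forces $z\neq 0$, and conversely if $M\neq 0$ then some column of $B$ is nonzero and the strict inequality forces $y\neq 0$. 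Since $(ii)$ does not mention the factorization, this simultaneously proves the equivalence with $(iii)$ and justifies the quantifier ``any'' there.

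\textbf{$(iii)\Rightarrow(i)$.} Given $M=AB$ with $\rank(M)=r$ and a vector $y\neq 0$ witnessing $(iii)$, I would first rotate the half-space direction onto a coordinate axis: pick an invertible $R\in\mathbb{R}^{r\times r}$ whose first row is $y^T$, so that $\hat B:=RB$ satisfies $\hat B(1,j)=B(:,j)^Ty>0$ for every nonzero column of $B$ (and $\hat B(:,j)=0$ for its zero columns). Then clear the remaining entries to nonnegativity by adding large multiples of the first row: let $L\in\mathbb{R}^{r\times r}$ agree with the identity except for the entries $L(i,1)=t_i$ for $i\geq 2$ (an invertible matrix), with $t_i=\max\bigl(0,\ \max_{j:\,B(:,j)\neq 0}\bigl(-\hat B(i,j)/\hat B(1,j)\bigr)\bigr)$. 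Then $V:=L\hat B=(LR)B\geq 0$, and $U:=A(LR)^{-1}$ gives $UV=M$ with $U\in\mathbb{R}^{m\times r}$ and $V\in\mathbb{R}^{r\times n}_+$; hence $\rank_s(M)\leq r=\rank(M)$, which combined with the always-valid bound $\rank(M)\leq\rank_s(M)$ yields $(i)$. I expect this explicit construction to be the main obstacle: it is where the ``interior of a half space'' hypothesis is genuinely used (the inequality must be strict so the $t_i$ are finite), and where one must treat the zero columns of $B$ separately --- precisely the point mishandled in \cite{CHB15}.

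\textbf{$(i)\Rightarrow(iii)$.} Suppose $M=UV$ with $U\in\mathbb{R}^{m\times r}$, $V\geq 0$ and $r=\rank_s(M)=\rank(M)$, and let $(A,B)$ be an arbitrary rank-$r$ factorization. Since $\col(U)=\col(A)=\col(M)$ and both $U$ and $A$ have full column rank, $U=AT$ for some invertible $T\in\mathbb{R}^{r\times r}$, and cancelling $A$ in $AB=ATV$ gives $B=TV$. Setting $y:=T^{-T}e$ with $e$ the all-ones vector (so $y\neq 0$), we get $B(:,j)^Ty=V(:,j)^Te=\sum_i V(i,j)$, which is $>0$ exactly when $V(:,j)\neq 0$, i.e.\ when $B(:,j)\neq 0$. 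Thus every nonzero column of $B$ lies in the interior of $\mathcal{P}_y$, which is $(iii)$, closing the cycle.
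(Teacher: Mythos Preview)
Your argument is correct and follows the same overall architecture as the paper: the equivalence $(ii)\Leftrightarrow(iii)$ via the substitution $y=A^Tz$ and surjectivity of $A^T$, then the implication $(i)\Rightarrow(iii)$ (the paper does $(i)\Rightarrow(ii)$, but it is the same idea) using the all-ones vector pushed through the change of basis between $U$ and $A$.

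The one place where your route genuinely diverges is $(iii)\Rightarrow(i)$. The paper keeps the factorization $(A,B)$ and builds $V=(I+\alpha y^T)B$ as a rank-one perturbation, which forces it to invoke the Sherman--Morrison formula and then prove the nontrivial inequality $y^T\alpha>-1$ (this in turn requires a preliminary sign-flip on the rows of $B$ so that each row has a positive maximum). Your construction sidesteps all of that: by first rotating $y$ onto the first coordinate axis via an invertible $R$, and then clearing negativity with a unit lower-triangular $L$, the matrix $LR$ is invertible by inspection, and no inequality argument is needed. This is cleaner as a proof. What the paper's version buys is that its explicit $\alpha_i$ formula feeds directly into Algorithm~\ref{exactseminmf} (steps~4--6) without the auxiliary basis completion $R$; your construction would yield a slightly different, but equally explicit, algorithm.
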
 
\begin{proof} 
We assume without loss of generality (w.l.o.g.) that $M$ does not contain a zero column (otherwise discard it, which does not influence the conditions above).

The equivalence $(ii) \iff (iii)$ follows from simple linear algebra. 
Since the columns of the matrix $M$ belong to the interior of $\mathcal{P}_z$, we have $M^T z > 0$. 
Without loss of generality, we can take $z = Aw$ for some $w$. In fact, let us denote $A^{\bot}$ the orthogonal complement of $A$ so that, for any $z$, there exists $w$ and $w^{\bot}$ with $z = Aw + A^{\bot} w^{\bot}$ for which we have 
\[
0 < M^T z = B^T A^T \left( Aw + A^{\bot} w^{\bot} \right) = B^T A^T (Aw) = M^T (Aw) . 
\]
Hence, replacing $z$ with $Aw$ does not modify $M^T z > 0$. 
Moreover, the derivation above shows that the columns of $B$ belong to the half space $\mathcal{P}_y$ with $y = A^T Aw = A^T z$ which proves $(ii) \Rightarrow (iii)$. Proving the direction $(iii) \Rightarrow (ii)$ is similar: the left inverse $A^{\dagger} \in \mathbb{R}^{r \times m}$ of $A$ exists since $\rank(A)$ must be equal to $r$ ($A^{\dagger} A = I_r$) and taking $z = {A^{\dagger}}^T y$, we have 
\[
0 < B^T y 
= B^T \left(A^{\dagger} A\right)^T y
= B^T A^T {A^{\dagger}}^T y = M^T z. 
\]

Let us show $(iii) \Rightarrow (i)$. We have $y \in \mathbb{R}^r$ such that $x = B^T y > 0$. 
Note that the rows of $B$ are different from zero since $B \in \mathbb{R}^{r \times n}$ and $\rank(B) = r$. 
 Hence we can flip the sign of the rows of $B$ along with the corresponding entries of $y$ (keeping $B^Ty$ unchanged) so that the maximum entry on each row of $B$ is positive, that is, $\max_j B(i,j) > 0$ for all~$i$ (see for example step~2 of Algorithm~\ref{exactseminmf}). 

Let  
\[
V(i,:) = B(i,:) +  \alpha_i x^T = B(i,:) +  \alpha_i y^T B = (e_i + \alpha_i  y)^T B, 
\]
where $\alpha_i = \max\left(0, \max_j \frac{-B(i,j)}{x_j}\right)$
 for all $1 \leq i \leq r$ so that $V(i,:) \geq 0$, and $e_i$ is the $i$th column of the identity matrix. In other terms, 
\[
V = B + \alpha x^T = B +  \alpha y^T B  = \left(I + \alpha y^T \right) B \geq 0. 
\]
We can take $U = A (I +  \alpha  y^T )^{-1}$ so that $M = A B = UV$ with $V \geq 0$.  
Using Sherman-Morrison formula, we have that 
\[
\left(I +  \alpha  y^T \right)^{-1} = I - \frac{\alpha y^T}{ 1  + y^T \alpha }, 
\] 
hence $U$ can be computed given that $y^T \alpha \neq - 1$. It remains to show that $y^T \alpha \neq - 1$. We have 
\begin{align*}
y^T \alpha = \sum_{i=1}^r y_i \alpha_i 
& = \sum_{i=1}^r y_i \max\left(0, \max_j \frac{-B(i,j)}{x_j}\right) \\
& > \sum_{i=1}^r y_i \left( \frac{1}{n} \sum_{j=1}^n \frac{-B(i,j)}{x_j}\right) \\
& = \frac{-1}{n} \sum_{j=1}^n  \left(   \frac{ \sum_{i=1}^r  B(i,j) y_i}{ \sum_{k=1}^r B(k,j) y_k }\right) = - 1. 
\end{align*}
The strict inequality follows from the fact that $\max_j B(i,j) > 0$ for all~$i$. 

Let us show $(i) \Rightarrow (ii)$. Let $r = \rank(M) = \rank_s(M)$, and $M = UV$ with $U \in \mathbb{R}^{m \times r}$ and $V \in \mathbb{R}^{r \times n}_+$ where $\rank(U) = \rank(V) = r$. Since no column of $M$ is equal to zero, 
no column of $V$ is hence $V^T e > 0$. Since $U$ is full rank, its left inverse $U^{\dagger} \in \mathbb{R}^{r \times m}$ exists ($U^{\dagger} U = I_r$). This implies that 
\[
 M^T \left({U^{\dagger}}^T e \right) = V^T U^T {U^{\dagger}}^T e = V^T (U^{\dagger} U)^T e  = V^T e > 0. 
\] 
\end{proof}

In the remainder of the paper, we say that \emph{a matrix $M$ is semi-nonnegative} if and only if $\rank(M) = \rank_s(M)$ if and only if the non-zero columns of $M$ are contained in the interior of a half space.

\begin{remark} \label{srkn}
Note that if $\rank(M) = n$, then $M \in \mathbb{R}^{m \times n}$ necessarily contains a positive vector in its row space 
(since it spans $\mathbb{R}^n$) hence $\rank_s(M) = \rank(M) = n$. 
We also have $\rank_s(M) \leq n$ using the trivial decomposition $M = M I_n$ where $I_n$ is the $n$-by-$n$ identity matrix; see also \cite[Lemma~1]{CHB15}. 
\end{remark}

\begin{theorem} \label{th3}
Given a matrix $M$, 
solving exact semi-NMF can be done in polynomial time (both in the Turing machine model and the real model of computation).  
\end{theorem}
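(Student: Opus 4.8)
The plan is to turn the dichotomy of Theorems~\ref{th1} and \ref{th2} into a constructive algorithm and then verify that each of its steps runs in polynomial time, in both models of computation. First I would clear away degeneracies: if $M=0$ then $\rank_s(M)=0$; otherwise let $M_0$ be the matrix obtained from $M$ by deleting its zero columns. Deleting zero columns changes neither $\rank$ nor, by Theorem~\ref{th2}, $\rank_s$, and any exact semi-NMF $M_0 = U V_0$ with $V_0 \ge 0$ extends to one of $M$ by reinserting zero columns into $V_0$ at the appropriate positions; hence it suffices to handle a matrix without zero columns. (This reinsertion is exactly the point mishandled in \cite{CHB15}; see the footnote to Theorem~\ref{th2}.)

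Next I would compute $r = \rank(M_0)$ together with a full-rank factorization $M_0 = AB$, $A$ of full column rank $r$ and $B$ of full row rank $r$ --- for instance by row reduction, which costs polynomially many arithmetic operations and, via a fraction-free variant, stays polynomial in bit-size in the Turing model. By Theorem~\ref{th2}(iii), everything now hinges on whether the nonzero columns of $B$ lie in a common open half-space through the origin, that is, on whether the strict homogeneous system $B^T y > 0$, $y \in \mathbb{R}^r$, has a solution; and since $B^T y > 0$ is solvable if and only if the polyhedron $\{ y : B^T y \ge e \}$ is nonempty (rescale any strict solution, $e$ denoting the all-ones vector), this is an instance of linear feasibility, which when feasible also yields such a $y$.

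Given the outcome, I would assemble the factorization exactly as in the proofs already carried out. If $B^T y > 0$ is feasible, output $\rank_s(M_0) = r$ and the rank-$r$ pair from the proof of Theorem~\ref{th2}: with $x = B^T y > 0$, flip the signs of the rows of $B$ and of the entries of $y$ so that $\max_j B(i,j) > 0$ for every $i$, set $\alpha_i = \max\bigl(0, \max_j(-B(i,j)/x_j)\bigr)$, and return $V = (I + \alpha y^T) B \ge 0$ and $U = A (I + \alpha y^T)^{-1}$, the inverse being evaluated by the Sherman--Morrison formula; the estimate in the proof of Theorem~\ref{th2} shows $y^T \alpha \ne -1$, so $U$ is well defined and $M_0 = UV$. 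If $B^T y > 0$ is infeasible, output $\rank_s(M_0) = r+1$ and the rank-$(r+1)$ pair of Theorem~\ref{th1}. Finally reinsert the zero columns. Correctness is immediate from Theorems~\ref{th1} and \ref{th2} and Corollary~\ref{cor1}, and every step except the feasibility test amounts to a bounded number of rank computations, linear solves and rank-one updates, hence is polynomial.

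The one step requiring genuine care is the feasibility test for $B^T y > 0$, which must run in polynomial time both in the Turing model and in the real model. In the Turing model this is a textbook application of the ellipsoid (or an interior-point) method to $\{ y : B^T y \ge e \}$. In the real model of computation the situation is more delicate, since worst-case polynomial-time linear programming over arbitrary reals is subtle; there I would exploit that only \emph{feasibility} is needed and that, by Gordan's theorem, $B^T y > 0$ is infeasible precisely when $\ker B$ contains a nonzero nonnegative vector, so that one may work in whichever of $\mathbb{R}^r$ or $\ker B \subseteq \mathbb{R}^n$ has the smaller dimension $\min(r, n-r)$. In particular, when $n-r$ is small --- for instance $n - r \le 2$, which includes the case $\rank(M) = n$ of Remark~\ref{srkn} --- the test reduces to reading off a basis of $\ker B$ and solving a one- or two-dimensional linear feasibility problem by elementary linear algebra; for the general case one invokes a polynomial-time real-model linear-feasibility routine. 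Working this out, together with the bookkeeping for zero columns and the full-rank hypotheses required by Sherman--Morrison, is the crux of the argument.
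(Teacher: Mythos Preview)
Your proposal is correct and follows essentially the same route as the paper: reduce exact semi-NMF to a linear feasibility test via Theorem~\ref{th2}, then assemble the factors using the explicit constructions of Theorems~\ref{th1} and~\ref{th2}. The only cosmetic difference is that the paper tests feasibility directly on the columns of $M$ (the system $M(:,j)^T z \ge 1$ for the nonzero columns), whereas you first compute a full-rank factorization $M_0 = AB$ and test $B^T y \ge e$; by the equivalence (ii)$\Leftrightarrow$(iii) in Theorem~\ref{th2} these are interchangeable.

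Your discussion of the real-model complexity is over-elaborate and does not actually close the gap you open. The paper handles both models in one stroke by citing a reference for polynomial-time linear feasibility; your Gordan-theorem detour only treats the trivial cases $n-r \le 2$ and then, for the general case, falls back on ``a polynomial-time real-model linear-feasibility routine'' --- precisely the black box you were trying to justify. Either cite the relevant result and move on, as the paper does, or drop the side discussion; the partial cases add nothing to the argument.
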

\begin{proof} 
By Theorem~\ref{th2}, it suffices to check whether a positive vector belongs to the row space of $M$ (after having discarded the zero columns). For example, one can check whether the following linear system of inequalities has a solution: 
\begin{equation} \label{sysineq} 
 M(:,j)^T z \geq 1 \text{ for all $j$ such that $M(:,j) \neq 0$}. 
\end{equation}
If the system is feasible 
(which can be checked in polynomial time, both in the Turing machine model and the real model of computation \cite{B04}), 
$\rank_s(M) = \rank(M)$, otherwise $\rank_s(M) = \rank(M)+1$. 
The rank of a matrix and a corresponding low-rank factorization can be computed in polynomial time as well, e.g., using row-echelon form \cite{E67}. 
The factorization can be transformated into an exact semi-NMF using the construction of Theorem~\ref{th1} in the case $\rank_s(M) = \rank(M)+1$ and of Theorem~\ref{th2} in the case $\rank_s(M) = \rank(M)$. 
\end{proof} 

In practice, it is better to compute a factorization of $M$ using the singular value decomposition, which is implemented in our algorithms 
(Algorithms~\ref{svdinit} and~\ref{exactseminmf}). 

We have just showed how to compute an exact NMF. The same result can actually be used to compute approximate semi-NMF, 
given that the columns of the best rank-$r$ approximation of $M$ are contained in the same half space. 
\begin{corollary} \label{corsemiNMF}
Let $M \in \mathbb{R}^{m \times n}$. If the rank-$r$ truncated SVD of $M$ is semi-nonnegative, then 
semi-NMF~\eqref{semiNMF} can be solved in polynomial time in $m$, $n$ and $\mathcal{O}(\log(1/\epsilon))$ 
where $\epsilon$ is the precision of the truncated SVD decomposition.  
Algorithm~\ref{exactseminmf} is such a polynomial-time algorithm. 
\end{corollary}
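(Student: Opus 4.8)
The plan is to reduce the approximate semi-NMF of $M$ to an \emph{exact} semi-NMF of its rank-$r$ truncated SVD, and then read off optimality from the best-rank-$r$ approximation property of the SVD (Eckart--Young). The hypothesis that the rank-$r$ truncated SVD of $M$ is semi-nonnegative is exactly what makes this reduction lossless.

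First I would compute the rank-$r$ truncated SVD of $M$ up to precision $\epsilon$, which takes time polynomial in $m$, $n$ and $\mathcal{O}(\log(1/\epsilon))$ (see \cite{B04}), and write the resulting best rank-$r$ approximation as $M_r = AB$ with $A \in \mathbb{R}^{m \times r}$ and $B \in \mathbb{R}^{r \times n}$ (for instance $A = U_r \Sigma_r$, $B = V_r^T$); note $\rho := \rank(M_r) = \min(r,\rank(M)) \leq r$. Since $M_r$ is semi-nonnegative by assumption, Theorem~\ref{th2} yields $\rank_s(M_r) = \rho$, and its proof is constructive: after flipping the signs of the rows of $B$ (and the columns of $A$) so that $\max_j B(i,j) > 0$ for every $i$, one solves the linear feasibility system $B(:,j)^T y \geq 1$ over the non-zero columns of $B$, sets $\alpha_i = \max\bigl(0,\max_j -B(i,j)/(B^Ty)_j\bigr)$, and outputs $V = (I + \alpha y^T)B \geq 0$ together with $U = A(I + \alpha y^T)^{-1}$, the inverse existing because $y^T\alpha \neq -1$ (as shown in the proof of Theorem~\ref{th2}) and being available in closed form through the Sherman--Morrison formula; if $\rho < r$ one simply pads $U$ with $r-\rho$ zero columns and $V$ with $r-\rho$ zero rows. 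This is precisely Algorithm~\ref{exactseminmf}, and each of its steps (the feasibility test \cite{B04}, the sign flips, the matrix products, the rank-one update) runs in polynomial time, so the overall cost is polynomial in $m$, $n$ and $\mathcal{O}(\log(1/\epsilon))$.

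It then remains to verify that the pair $(U,V)$ so produced is optimal for \eqref{semiNMF}. By construction $UV = M_r$ and $V \geq 0$, hence $\|M - UV\|_F = \|M - M_r\|_F$; conversely, for any feasible $(U',V')$ with $V' \geq 0$ the product $U'V'$ has rank at most $r$, so by Eckart--Young $\|M - U'V'\|_F \geq \|M - M_r\|_F$. Therefore $(U,V)$ attains the minimum, which establishes both optimality and the running time. The part that requires care is not this optimality argument — a one-line consequence of Eckart--Young once the exact semi-NMF of $M_r$ is in hand — but the bookkeeping around the approximation: the truncated SVD is only computed to precision $\epsilon$, so one really works with an $\epsilon$-accurate $M_r$, and one must correctly handle its zero columns (discarded before the feasibility test, exactly as in Theorem~\ref{th2}) and the degenerate case $\rank(M_r) < r$. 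These do not affect the conclusion, but they are precisely the spots where a careless statement (cf.\ the mishandling of zero columns noted after Theorem~\ref{th2}) would break down.
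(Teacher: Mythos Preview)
Your proposal is correct and follows essentially the same approach as the paper: invoke Theorem~\ref{th2} (constructively) on the best rank-$r$ approximation $M_r$, then use Eckart--Young to conclude optimality and cite the polynomial-time computability of the truncated SVD. The paper's proof says exactly this in one sentence, whereas you spell out the construction and the edge cases (zero columns, $\rank(M_r)<r$) explicitly; the only quibble is that \cite{B04} is the reference for linear feasibility, not for SVD complexity --- the paper points to \cite{TB97, V91} for the latter.
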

\begin{proof} 
This follows from Theorem~\ref{th2} and the fact that the rank-$r$ truncated SVD provides an optimal rank-$r$ approximation and can be computed up to any precision $\epsilon$ in time polynomial  in $m$, $n$ and $\mathcal{O}(\log(1/\epsilon))$; see, e.g., \cite{TB97, V91} and the references therein.    
\end{proof}

If no positive vector belongs to the row space of the second factor $B$ of a rank-$r$ factorization $AB$, then, by Theorem~\ref{th2}, 
there does not exist a rank-$r$ semi-NMF $U$ and $V\geq 0$ such that $AB = UV$ and the linear system~\eqref{sysineq} is not feasible.  
In that case, we propose to use the following heuristic: solve 
\begin{equation} \label{yepsi}
\min_{y \in \mathbb{R}^r, \epsilon \in \mathbb{R}_+}  \epsilon  
 \quad 
\text{ such that } 
\quad 
(B(:,j)+ \epsilon \, e)^T y \geq 1 \text{ for all $j$ such that $B(:,j) + \epsilon \, e \neq 0$}. 
\end{equation}
Although Problem~\eqref{yepsi} is not convex, a solution can be obtained using a bisection method on the variable $\epsilon$. 
In fact, the optimal solution $\epsilon^*$ will belong to the interval $[0, \epsilon_+]$ where 
$\epsilon_+ = \max_{i,k} \max(-B_{ik},0)$ since $B + \epsilon_+ \geq 0$ hence the problem is feasible (e.g., $y = e$). 
Note that the bisection method first checks whether $\epsilon = 0$ is feasible in which case it terminates in one step and returns an optimal semi-NMF. In our implementation, we used a relative precision of $10^{-3}$, that is, we stop the algorithm as soon as $\epsilon_f - \epsilon_i \leq 10^{-3} \epsilon_+$, where 
$\epsilon_f$ is the smallest feasible $\epsilon$ found so far 
(initialized at $\epsilon_+$), and 
$\epsilon_i$ is the largest infeasible $\epsilon$ found so far (initialized at 0) so that our bisection procedure has to solve at most ten linear systems (since $0.001 > 2^{-10}$). The reason we choose a relatively low precision is that high precision is not necessary because, when the optimal $\epsilon^* \neq 0$, 
the algorithm will be used as an initialization procedure for Algorithm~\ref{cdsemi} that will refine the semi-NMF solution locally.

Algorithm~\ref{exactseminmf} implements this strategy and will be used in Section~\ref{sec5} to initialize Algorithm~\ref{cdsemi} and will be shown to perform extremely well. 

\algsetup{indent=2em}
\begin{algorithm}[ht!]
\caption{Heuristic for Semi-NMF \label{exactseminmf} (see Theorem~\ref{th2} and Corollary~\ref{corsemiNMF})} 
\begin{algorithmic}[1] 
\REQUIRE A matrix $M \in \mathbb{R}^{m \times n}$, a factorization rank $r$. 
\ENSURE A rank-$r$ semi-NMF $(U,V)$ of $M \approx UV$ with $V \geq 0$. 
    \medskip  
		
\STATE $[A,S,B^T] =$ svds$(M,r)$ ; \% \emph{See the Matlab function \texttt{svds} } 

\STATE For each $1 \leq i \leq r$: multiply $B(i,:)$ by $-1$ if $\min_j B(i,j) \leq \min_j (-B(i,j))$ ; 

\STATE Let $(y^*, \epsilon^*)$  be the optimal solution of the following the optimization problem 
\[ 
\min_{y \in \mathbb{R}^r, \epsilon \in \mathbb{R}_+}  \epsilon  
 \quad 
\text{ such that } 
\quad 
(B(:,j)+ \epsilon \, e)^T y \geq 1 \text{ for all $j$ such that $B(:,j) + \epsilon \, e \neq 0$}. 
\] 
\emph{\% If $\epsilon^* = 0$ ($\iff$ $B$ is semi-nonnegative), then the heuristic is optimal. }

\STATE $x = (B + \epsilon^* \, 1_{r \times n})^T y^* \geq 1$ ;  \quad \emph{\% $1_{r \times n}$ is the $r$-by-$n$ matrix of all ones. }
\STATE $\alpha_i = \max\left(0, \max_{j} 
\frac{-B(i,j)}{x(j)} \right)$ for all $1 \leq i \leq r$ ; 
\STATE $V = B + \alpha x^T $ ; 

\STATE  $U \leftarrow \argmin_{X \in \mathbb{R}^{m \times r}} ||M - XV||_F^2$ \quad ($= M/V$ in Matlab). 
\end{algorithmic}  
\end{algorithm}

\begin{remark}\label{remeps}
It is interesting to note that the value of $\epsilon^*$ tells us how far $B$ is from being semi-nonnegative (hence $AB$; see Theorem~\ref{th2}). 
In fact, by construction, the matrix $B_{\epsilon^*} = B + \epsilon^* 1_{r \times n}$ is semi-nonnegative. The idea behind Algorithm~\ref{exactseminmf} is to replace $B$ with its semi-nonnegative approximation $B_{\epsilon^*}$. 
If $B$ is close to being semi-nonnegative, $\epsilon^*$ will be small and Algorithm~\ref{exactseminmf} will perform well; see Section~\ref{sec5} for the numerical experiments. 
Note that other strategies for finding a semi-nonnegative matrix close to $B$ are possible and it would be interesting to compare them with Algorithm~\ref{exactseminmf}: this is a direction for further research. 
\end{remark}

\subsection{Nonnegative Matrices} \label{secnm}

Theorem~\ref{th2} implies that 
\begin{corollary} 
Let $M \in \mathbb{R}^{m \times n}_+$, then $\rank_s(M) = \rank(M)$. 
\end{corollary}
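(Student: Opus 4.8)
The plan is to invoke the equivalence $(i) \iff (ii)$ of Theorem~\ref{th2}: it suffices to exhibit a single non-zero vector $z \in \mathbb{R}^m$ such that $M(:,j)^T z > 0$ for every column index $j$ with $M(:,j) \neq 0$. The natural candidate is $z = e$, the all-ones vector, which is clearly non-zero (as $m \geq 1$).

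The key step is then elementary. For each $j$ we have $M(:,j)^T e = \sum_{i=1}^m M_{ij} \geq 0$ because $M \geq 0$. Moreover, since every entry $M_{ij}$ is nonnegative, this sum vanishes if and only if $M_{ij} = 0$ for all $i$, i.e., if and only if $M(:,j) = 0$. Hence for every $j$ with $M(:,j) \neq 0$ we get the strict inequality $M(:,j)^T e > 0$, so condition (ii) of Theorem~\ref{th2} holds with $z = e$, and consequently $\rank_s(M) = \rank(M)$.

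There is essentially no obstacle here; the only point requiring a moment's care is the degenerate case $M = 0$, but it is already handled correctly by Theorem~\ref{th2} (the quantifier in (ii) ranges only over non-zero columns, so it is vacuously true, consistent with $\rank_s(0) = \rank(0) = 0$). One could alternatively phrase the conclusion via the ``positive vector in the row space'' formulation of (ii): after deleting zero columns, $M^T e > 0$ lies in the row space of $M$, which gives the same result. I would keep the $z = e$ version as it is the shortest.
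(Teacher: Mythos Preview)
Your proof is correct and follows essentially the same approach as the paper: both take $z = e$ and observe that any non-zero nonnegative column of $M$ satisfies $M(:,j)^T e > 0$, so condition~(ii) of Theorem~\ref{th2} holds. The paper phrases this as ``any nonnegative vector different from zero belongs to the interior of the half space $\mathcal{P}_e$,'' which is exactly your argument.
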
 
\begin{proof}
In fact, any nonnegative vector different from zero belongs to the interior of the half space $\mathcal{P}_e = \{ x \in \mathbb{R}^m | \sum_{i=1}^m x_i \geq 0 \}$. 
\end{proof} 

We have seen that if the best rank-$r$ approximation of a matrix contains a positive vector in its row space, then an optimal semi-NMF of the corresponding matrix can be computed; see Corollary~\ref{corsemiNMF}. 
This will be in general the case for nonnegative matrices. In fact, the Perron-Frobenius theorem guarantees that this will be the case when $M^T M$ is a irreducible nonnegative matrix (since its first eigenvector can be chosen positive). Recall that a matrix $A$ is irreducible if the graph induced by $A$ is strongly connected (every vertex is reachable from every other vertex).  
\begin{corollary}  \label{irreduc}
Let $M \in \mathbb{R}^{m \times n}_+$. 
If $M^T M$ is irreducible, then semi-NMF~\eqref{semiNMF} can be solved via the truncated SVD for any rank $r$. 
\end{corollary}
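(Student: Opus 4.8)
The plan is to reduce the statement to Corollary~\ref{corsemiNMF}: it suffices to exhibit a positive vector in the row space of the rank-$r$ truncated SVD $M_r$ of $M$ (after discarding its zero columns, if any), since then $M_r$ is semi-nonnegative by Theorem~\ref{th2} and Algorithm~\ref{exactseminmf} solves semi-NMF~\eqref{semiNMF} via the truncated SVD. I would first dispose of the trivial case $M = 0$, and otherwise assume $M \neq 0$, so that $\sigma_1 > 0$.

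Next, I would write $M_r = \sum_{i=1}^{r} \sigma_i u_i v_i^T$ with the $v_i \in \mathbb{R}^n$ the top right singular vectors of $M$, so that the row space of $M_r$ is $\operatorname{span}\{ v_i : \sigma_i > 0,\ i \leq r \}$, which contains $v_1$. The $v_i$ are eigenvectors of $M^T M$, and $\sigma_1^2$ is the largest eigenvalue of $M^T M$, i.e. its spectral radius. Since $M \geq 0$, the matrix $M^T M$ is nonnegative, and by hypothesis it is irreducible; the Perron--Frobenius theorem then tells us that $\sigma_1^2$ is a simple eigenvalue of $M^T M$ with an entrywise positive eigenvector. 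Hence, after choosing the sign appropriately, $v_1 > 0$. (Simplicity of $\sigma_1^2$ also forces $\sigma_1 > \sigma_2$, so $v_1$ is unambiguous up to sign and belongs to every rank-$r$ truncated SVD with $r \geq 1$.)

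Then I would observe that $v_1 > 0$ lies in the row space of $M_r$, and that $M_r$ has no zero column: a zero column $j$ of $M_r$ would force the $j$th coordinate of every vector in its row space, in particular of $v_1$, to vanish, contradicting $v_1 > 0$. By the characterization in Theorem~\ref{th2}, $M_r$ is therefore semi-nonnegative, and Corollary~\ref{corsemiNMF} finishes the argument. For the remaining regime $r \geq \rank(M)$ the conclusion is immediate anyway, since the truncated SVD returns $M$ itself and $M \geq 0$ is semi-nonnegative.

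The only substantive step is the appeal to Perron--Frobenius that upgrades the top right singular vector to a strictly positive vector; the rest is bookkeeping. The point to be careful about is matching ``eigenvector of $M^T M$ for its largest eigenvalue'' with ``top right singular vector of $M$'' and with ``row space of $M_r$'', together with the sign choice and the (non-)existence of zero columns — but irreducibility makes $\sigma_1^2$ simple, which is exactly what removes the multiplicity/sign ambiguity and rules out zero columns.
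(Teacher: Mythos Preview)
Your argument is correct and is essentially the same as the paper's: both invoke Perron--Frobenius on the irreducible nonnegative matrix $M^T M$ to obtain a strictly positive first right singular vector $v_1$, which lies in the row space of every rank-$r$ truncated SVD, so that Corollary~\ref{corsemiNMF} applies. Your treatment is simply more explicit about the bookkeeping (simplicity of $\sigma_1$, absence of zero columns in $M_r$, the degenerate cases $M=0$ and $r\geq\rank(M)$), but the substantive idea is identical.
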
 

Corollaries~\ref{corsemiNMF} and~\ref{irreduc}  suggest that \emph{in almost all cases} semi-NMF of nonnegative matrices can be computed using a simple transformation of an unconstrained approximation (such as the truncated SVD).
This observation challenges the meaning of semi-NMF of nonnegative matrices: does semi-NMF of nonnegative matrices really make sense? 
In fact, most nonnegative matrices encountered in practice are irreducible, and, even if they are not, 
it is likely for Corollary~\ref{corsemiNMF} to hold since the columns of the best rank-$r$ approximation of a nonnegative matrix are likely to be close to the nonnegative orthant hence belong to a half space (in particular $\mathcal{P}_e$).  (Note that, by the Perron-Frobenius theorem, for $M \in \mathbb{R}^{m \times n}_+$ and $r=1$, there always exists a nonnegative best rank-1 approximation.)  
In these cases, semi-NMF can be solved by a simple transformation of the SVD and it is not clear what semi-NMF brings to the table. 

Note that several authors have proposed semi-NMF algorithms and applied them to nonnegative matrices, e.g., multiplicative updates where proposed in \cite{DTJ10, TBZS14}. 
Our results shows that, from a theoretical point of view, this does not really make sense (since local optimization techniques such as the multiplicative updates usually converge relatively slowly and are not guaranteed to converge to an optimal solution). 
However, it is interesting to check whether the solutions obtained with these heuristics (always) generate optimal solutions under the above conditions. We will see in Section~\ref{sec5} that Algorithm~\ref{cdsemi} does not always converge to an optimal solution for (semi-)nonnegative matrices for all initializations (in particular, when $r$ is large). 

A direction for further research that would make sense for semi-NMF of nonnegative matrices is to add structure to the factors $U$ and/or $V$. For example, imposing $V$ to be sparse would enhance the clustering property of semi-NMF. (Note that the construction of Theorem~\ref{th2} usually generates a matrix $V$ with a single zero per row.) In fact, if $V$ is required to have a single non-zero entry per column equal to one, semi-NMF reduces to $k$-means~\cite{DTJ10}.

\begin{remark}
The results of this section also apply to nonpositive matrices since $M$ is nonpositive if and only if $-M$ is nonnegative. Hence if we have a semi-NMF of $-M = UV$, we have a semi-NMF for $M = (-U) V$. 
\end{remark}

\subsection{Semi-Nonnegative Matrices} \label{snm}

If one performs a semi-NMF of a semi-nonnegative matrix $M$ with factorization rank $r = \rank(M) = \rank_s(M)$, then, by Theorem~\ref{th2}, the solution computed by Algorithm~\ref{exactseminmf} will be optimal. 
However, if $r < \rank(M)$, it is not guaranteed to be the case. 
In this section, we provide a sufficient condition for Algorithm~\ref{exactseminmf} to be optimal for semi-nonnegative matrices $M$ when $r < \rank_s(M)$; 
see Theorem~\ref{th4}.  
Intuitively, the idea is the following: the columns of the best rank-$r$ approximation $X$ of $M$ should be relatively close to the columns of $M$ hence it is likely that they also belong to a half space. In that case, by Corollary~\ref{corsemiNMF}, Algorithm~\ref{exactseminmf} is optimal.  
Note that if the best rank-$k$ approximation of $M$ contains a positive vector in its row space, then the best rank-$r$ approximation of $M$
 for all $r \geq k$ does as well since optimal low-rank approximations can be computed one rank-one factor at a time; see, e.g., \cite{GV96}. 



\begin{theorem} \label{th4} 
Let $M$ be a semi-nonnegative matrix so that there exist $z$ with $M(:,j)^Tz > 0$ for all $j$ such that $M(:,j) \neq 0$ and $||z||_2 = 1$. 
Let $X$ be an approximation of $M$ such that 
\[
||M(:,j) - X(:,j)||_2 
\quad < \quad 
M(:,j)^Tz \quad \text{ for all $j$ such that $M(:,j) \neq 0$}, 
\]
and $X(:,j) = 0$ whenever $M(:,j) = 0$ (which is optimal and does not influence the rank of $X$). 
Then $X$ is semi-nonnegative, that is, there exists a rank-$r$ semi-NMF $(U,V)$ such that $X = UV$. 
\end{theorem}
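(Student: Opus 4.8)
The plan is to show that the same vector $z$ that certifies semi-nonnegativity of $M$ also certifies it for $X$. By Theorem~\ref{th2}, it suffices to exhibit a non-zero vector $w$ with $X(:,j)^T w > 0$ for every $j$ with $X(:,j) \neq 0$; I claim $w = z$ works.

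First I would fix an index $j$ with $M(:,j) \neq 0$ and estimate $X(:,j)^T z$ directly. Writing $X(:,j) = M(:,j) - \big(M(:,j) - X(:,j)\big)$ and using the Cauchy--Schwarz inequality together with $\|z\|_2 = 1$, we get
\[
X(:,j)^T z \;=\; M(:,j)^T z - \big(M(:,j) - X(:,j)\big)^T z \;\geq\; M(:,j)^T z - \|M(:,j) - X(:,j)\|_2\,\|z\|_2 \;>\; 0,
\]
where the last strict inequality is exactly the hypothesis on the approximation error. For the remaining indices $j$ with $M(:,j) = 0$ we have $X(:,j) = 0$ by assumption, so there is nothing to check. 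Hence every non-zero column of $X$ lies in the (open) half space $\mathcal{P}_z$, and the vector $z$ lies in the row space of $X$ after deletion of zero columns (since $X^T z > 0$ componentwise on the surviving columns means $z$ restricted appropriately is a positive vector expressible via $X$; more precisely $X^T z$ is a positive vector in the column space of $X^T$, i.e.\ the row space of $X$). Therefore condition~(ii) of Theorem~\ref{th2} holds for $X$, giving $\rank_s(X) = \rank(X) =: r$, and the construction in the proof of Theorem~\ref{th2} (or equivalently the one carried out in Algorithm~\ref{exactseminmf} with $\epsilon^* = 0$) produces an explicit rank-$r$ semi-NMF $X = UV$ with $V \geq 0$.

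I do not expect a genuine obstacle here: the argument is a one-line Cauchy--Schwarz estimate feeding into Theorem~\ref{th2}. The only point requiring a little care is the bookkeeping around zero columns --- making sure that setting $X(:,j) = 0$ whenever $M(:,j) = 0$ is consistent with $X$ being a best rank-$r$ approximation and does not inflate $\rank(X)$ --- but this is precisely the parenthetical remark already built into the statement, and it is the reason the theorem is phrased in terms of the ``non-zero columns'' formulation of semi-nonnegativity rather than a naive ``all columns in a half space'' version. A secondary subtlety worth a sentence is that the strict inequality in the hypothesis is essential: if $\|M(:,j) - X(:,j)\|_2$ were allowed to equal $M(:,j)^T z$, the perturbed column could land exactly on the boundary hyperplane $\{x : x^T z = 0\}$, which would only certify membership in the closed half space and not the interior, and Theorem~\ref{th2}(ii) genuinely requires the interior.
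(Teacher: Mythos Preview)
Your proof is correct and follows essentially the same line as the paper's: write $X(:,j)^T z = M(:,j)^T z - (M(:,j)-X(:,j))^T z$, apply Cauchy--Schwarz with $\|z\|_2=1$, and invoke Theorem~\ref{th2}(ii). The paper's version is just the two-line computation without the additional commentary; your remark about the row space of $X$ is slightly roundabout (and not needed, since Theorem~\ref{th2}(ii) is already phrased as the half-space condition $X(:,j)^T z>0$), but it does no harm.
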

\begin{proof} 
Let us denote the residual of the approximation $E = M - X$. 
For all $j$ such that $M(:,j) \neq 0$ we have  
\[
X(:,j)^T z = \left( M(:,j) - E(:,j) \right)^T z
= M(:,j)^T z -  E(:,j)^T z \geq M(:,j)^T z - ||E(:,j)||_2 > 0,  
\]
while $M(:,j) = 0$ implies $X(:,j) = 0$, hence the result follows from Theorem~\ref{th2}. 
\end{proof}


Note that we have for all $j$ that 
\mbox{$\lVert M(:,j) - X(:,j)\rVert_2 \leq \sigma_{k+1}$} for $X$ being the best rank-$k$ approximation of $M$,  where $\sigma_{k+1}$ is the $(k+1)$th singular value of $M$. 
Hence the smaller  
$\sigma_{k+1}$ is, the more likely it is for $X$ to be semi-nonnegative. 
This also means that the larger $k$ is, 
the more likely it is for Algorithm~\ref{exactseminmf} to perform well (in particular, to return an optimal semi-NMF). 
This will be illustrated in Section~\ref{sec5} with some numerical experiments.

\section{Computational Complexity and Ill-posedness of Semi-NMF} \label{sec4}

Despite the positive results described in the previous sections, 
the semi-NMF problem in the general case 
(that is, when the input matrix is not close to being semi-nonnegative) seems more difficult. 
The rank-one semi-NMF problem is the following: given $M \in \mathbb{R}^{m \times n}$, solve 
\begin{equation} \label{semi1a}
\min_{u \in \mathbb{R}^m, v \in \mathbb{R}^n} ||M-uv^T||_F^2 \quad \text{ such that } v \geq 0. 
\end{equation}

\begin{theorem} \label{th5}
Rank-one semi-NMF \eqref{semi1a} is NP-hard. 
\end{theorem}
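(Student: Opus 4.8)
The plan is to reduce from a known NP-hard problem to rank-one semi-NMF. The most natural choice is to reduce from a variant of MAX-CUT or, more directly, from the problem of deciding whether a certain indefinite quadratic form can be made large over the nonnegative orthant, since fixing $v\ge 0$ and optimizing over $u\in\mathbb{R}^m$ turns \eqref{semi1a} into a problem that depends only on $v$. First I would eliminate $u$: for fixed $v\neq 0$, the optimal $u$ is $u = Mv/\|v\|_2^2$, so
\[
\min_{u} \|M - uv^T\|_F^2 = \|M\|_F^2 - \frac{v^T M^T M v}{\|v\|_2^2}.
\]
Hence solving \eqref{semi1a} is equivalent to
\[
\max_{v \ge 0,\ v \neq 0} \frac{v^T (M^T M) v}{v^T v},
\]
that is, maximizing the Rayleigh quotient of the positive semidefinite matrix $A = M^T M$ over the nonnegative orthant. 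So it suffices to show that this constrained Rayleigh-quotient maximization is NP-hard for matrices $A$ of the form $M^TM$ — but since every PSD matrix $A$ can be written as $M^T M$ (take $M = A^{1/2}$, or any square-root factor, which is polynomial-size and polynomial-time computable up to the needed precision), it suffices to prove NP-hardness of $\max_{v\ge 0,\ v\neq0} \frac{v^T A v}{v^T v}$ over PSD $A$.

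The second step is to establish that this latter problem is NP-hard. The standard route is via the result of Motzkin--Straus connecting the maximum clique of a graph $G$ to $\max\{x^T A_G x : x \ge 0,\ e^T x = 1\}$ for a suitable matrix $A_G$ built from the adjacency matrix; equivalently, deciding whether the copositive/completely-positive-type quadratic form exceeds a threshold is NP-hard. I would take $G = (W,E)$, set $B = I + A_G$ (adjacency matrix plus identity) so that $\max\{x^TBx : x\ge0, e^Tx=1\} = 1 - 1/\omega(G)$ by Motzkin--Straus, and then choose a scalar $\lambda$ large enough that $A = \lambda e e^T - B$ is PSD (e.g. $\lambda = |W|$ suffices since $B \preceq |W| \cdot ee^T$ fails directly, so instead use $A = cI - B$ with $c = \|B\|_2+1 \le |W|+1$, which is PSD). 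Maximizing $\frac{v^TAv}{v^Tv} = c - \frac{v^TBv}{v^Tv}$ over $v\ge0$ is then equivalent to \emph{minimizing} $\frac{v^TBv}{v^Tv}$ over $v\ge 0$, and one checks that the minimum of $v^TBv/v^Tv$ over the nonnegative orthant is $1/\omega(G)$ — this is the clean reformulation of Motzkin--Straus as a quotient minimization, using that restricting to the simplex loses nothing by homogeneity. Reading off whether this minimum equals $1/k$ decides whether $\omega(G)\ge k$, giving the reduction. One has to be careful to carry this through with rational data and bounded bit-complexity, and to handle the optimization-vs-decision gap by arguing the optimal value is a ratio of small integers so that approximate optimization to polynomially many bits pins it down exactly.

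The main obstacle I anticipate is the precision/bit-complexity bookkeeping in going from $A$ (which we want exactly rational) to a factor $M$ with $M^TM = A$: a symmetric square root need not be rational, so I would instead factor $A = cI - B$ directly as a product $M^TM$ with $M$ rational. A convenient trick: write $A = cI - B = \begin{pmatrix} \sqrt{c}\,I \\ -\,? \end{pmatrix}$-type block construction — more precisely, if $c$ is chosen so that $cI - B = cI - I - A_G = (c-1)I - A_G$ and $A_G$ has a rational decomposition (e.g. $A_G = \frac12(L_- - L_+)$ via signed incidence structure is not PSD, so that fails), the cleanest fix is to take $c$ a perfect square or, failing that, to accept that $M$ need only satisfy $M^TM = A$ and use an LDL$^T$ or Cholesky-type rational decomposition of the rational PSD matrix $A$, which exists with polynomially-bounded denominators. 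Once $M$ is in hand, \eqref{semi1a} with this $M$ has optimal value $\|M\|_F^2 - \max_{v\ge0}\frac{v^TAv}{v^Tv} = \|M\|_F^2 - c + \frac{1}{\omega(G)}$, so a polynomial-time algorithm for rank-one semi-NMF would compute $\omega(G)$, contradicting $P\neq NP$. I would also double-check the degenerate cases ($v=0$, columns of $M$ that vanish) do not affect the argument, which is routine.
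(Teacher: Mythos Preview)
Your first reduction is exactly the paper's: eliminating $u$ turns \eqref{semi1a} into $\max_{v\ge 0,\,\|v\|_2=1} v^T(M^TM)v$, and every PSD matrix arises as $M^TM$. Your shift idea (replace $A$ by $cI-B$ or $B-\lambda_{\min}(B)I$) is also the same as the paper's, and it correctly shows that the PSD case is equivalent to the case of an arbitrary symmetric matrix.

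The genuine gap is in the Motzkin--Straus step. Motzkin--Straus gives $\max_{x\in\Delta} x^TA_Gx = 1-1/\omega(G)$ on the \emph{simplex} $\Delta=\{x\ge 0: e^Tx=1\}$, whereas your problem is a Rayleigh quotient, i.e.\ optimization over the \emph{sphere} $\{x\ge 0:\|x\|_2=1\}$. Homogeneity of $v^TBv/v^Tv$ lets you fix $\|v\|_2=1$, not $e^Tv=1$; these are different normalizations and give different problems. Concretely, with your choice $B=I+A_G$ one has $v^TBv = v^Tv + v^TA_Gv \ge v^Tv$ for every $v\ge 0$ (since $A_G\ge 0$ entrywise), so $\min_{v\ge 0,\,v\ne 0} v^TBv/v^Tv = 1$, attained at any coordinate vector $e_i$, and the value carries no information about $\omega(G)$. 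The claimed identity ``$\min v^TBv/v^Tv = 1/\omega(G)$'' is therefore false, not merely unproved.

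The fix is to abandon Motzkin--Straus here and hook directly into a hardness result stated on the sphere intersected with the nonnegative orthant. The paper does exactly this: after the shift to an arbitrary symmetric matrix $B$, take $B=-C$ and observe that deciding whether $\min_{v\ge 0,\,\|v\|_2=1} v^TCv\ge 0$ is the copositivity problem, which is co-NP-complete by Murty--Kabadi. This sidesteps both the simplex/sphere mismatch and your bit-complexity worries about factoring $A=M^TM$: the reduction stays at the level of symmetric matrices, and the square root of $B-\lambda_{\min}(B)I$ is only needed to exhibit some $M$, which is fine since NP-hardness is about the decision problem over real inputs.
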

\begin{proof}
Assume w.l.o.g.\@ that $||v||_2 = 1$. Then the optimal solution for $u$ is given by $u^* = Mv$. 
Therefore, at optimality,  
\begin{align*}
||M-uv||_F^2 
& = ||M||_F^2 - 2 u^T M v + ||uv^T||_F^2  \\
& = ||M||_F^2 - 2 v^T M^T M v + ||Mv||_2^2 ||v||_2^2 \\
& = ||M||_F^2 - 2 ||Mv||_2^2 + ||Mv||_2^2  \\
& =  ||M||_F^2 - ||Mv||_2^2 = ||M||_F^2 - v^T (M^T M) v ,  
\end{align*}
hence \eqref{semi1a} is equivalent to 
\begin{equation} \label{semi1b}
\max_{v \in \mathbb{R}^n} v^T (M^T M) v \quad \text{ such that } \quad  v \geq 0 \text{ and } ||v||_2 = 1. 
\end{equation} 
Since $M$ is arbitrary,  $M^T M$ can represent any semidefinite positive matrix hence rank-one semi-NMF is equivalent to maximizing a convex quadratic over the unit ball in the nonnegative orthant. 
As explained by Noah D.\@ Stein on Mathoverflow.net\footnote{See \url{http://mathoverflow.net/questions/48843/non-negative-quadratic-maximization}}, the problem~\eqref{semi1b} is equivalent to 
\begin{equation} \label{semi1c} 
\max_{v \in \mathbb{R}^n} v^T B v \quad \text{ such that } \quad  v \geq 0 \text{ and } ||v||_2 = 1, 
\end{equation} 
where $B$ is any symmetric matrix (not necessarily semidefinite positive). In fact, if $B$ is not semidefinite positive,  
one can consider the problem with $B - \lambda_{\min}(B) I_n \succeq 0$,  
where $A \succeq 0$ indicates that the matrix $A$ is positive semidefinite. 
In fact, it only changes the objective function by a constant value since, 
for $||v||_2 = 1$, 
\[  
v^T \left(B - \lambda_{\min}(B) I_n \right) v = v^T B v - \lambda_{\min}(B). 
\] 
Let us use the following result: Checking copositivity of a symmetric matrix $C$, that is, checking whether the optimal value of  
\[
\min_{v \geq 0, ||v||_2 = 1} v^T C v 
\]
is nonnegative, is co-NP-complete~\cite{MK87} 
(a decision problem is co-NP-complete if it is a member of co-NP--its complement is in NP--, and any problem in co-NP can be reduced to it in polynomial time; see \cite{Ar09} for more details).     
Since this problem can be solved using \eqref{semi1c} with $B = -C$, 
this implies that rank-one semi-NMF~\eqref{semi1a} is NP-hard.  
\end{proof}

In comparison to NMF, this is a bit surprising: In fact, rank-one NMF can be solved in polynomial time (this follows from the Perron-Frobenius and Eckart-Young theorems) although it is NP-hard in general~\cite{V09}. 
The reason behind this difference is that semi-NMF allows both negative and positive elements in the input matrix (clearly, rank-one semi-NMF of nonnegative matrices can also be solved in polynomial time; see Section~\ref{secnm}). In fact, rank-one NMF is also NP-hard if the input matrix is allowed to have both positive and negative signs \cite[Cor.1]{GG14}, 
that is, given a matrix $M \in \mathbb{R}^{m \times n}$, the problem 
\[
\min_{u \in \mathbb{R}^m, v \in \mathbb{R}^n} ||M-uv^T||_F^2 
\quad \text{ such that } \quad 
 u \geq 0 \text{ and } v \geq 0. 
\]
is NP-hard. \\ 

Let us now show that semi-NMF is not always a well-posed problem (this question was raised by Mohamed Hanafi in a personal communication), 
that is, that an optimal solution of~\eqref{semiNMF} does not always exist. 
Here is a simple example: 
\[ 
M =  \left( \begin{array}{ccc} 
1   & -1  &  0 \\
0   & 0  &   1 \\ 
\end{array} \right) . 
\]
The columns of $M$ belong to the same two-dimensional half space $\{ x \in \mathbb{R}^2 \ | \ x_2 \geq 0 \}$. 
However, the first two columns are on the boundary of that half space. 
Therefore, they are not contained in its interior hence $\rank_s(M) = 3$ (Theorem~\ref{th2}). 
However, the infimum of~\eqref{semiNMF} for $r = 2$ is equal to zero, taking
\[
U =  \left( \begin{array}{cc} 
1   & -1   \\
\delta   & \delta \\ 
\end{array} \right)  \; \text{ and } \; 
V =  \left( \begin{array}{ccc} 
1   & -1  & ( 2 \delta )^{-1} \\
0   & 0  & ( 2 \delta )^{-1} 
\end{array} \right) , 
\; \text{ with  } \;  
UV = 
\left( \begin{array}{ccc} 
1   & -1  &  0 \\
\delta   & \delta  &   1 \\ 
\end{array} 
\right) 
\]
and making $\delta$ tend to zero. 

Note that it is not likely for semi-NMF problems to be ill-posed, 
this only happens when the best cone approximating the columns of $M$ does not exist as it should be a half space.

\section{Numerical Experiments}  \label{sec5}

In  this section, we compare four strategies to initialize Algorithm~\ref{cdsemi} (which only requires the matrix $V$ as an input): 
\begin{enumerate}

\item \emph{Random initialization (RD)}: each entry of $V$ is generated following the uniform distribution in the interval $[0,1]$, that is, \texttt{V = rand(r,n)} in Matlab notations (which we will reuse in the following). 

\item \emph{K-means (KM)}: $V$ is taken as the binary cluster indicator matrix generated by k-means 
($V_{kj} = 1$ if and only if the $j$th column of $M$ belongs to the $k$th cluster) to which is added the constant\footnote{A priori, because we use Algorithm~\ref{cdsemi} and not the multiplicative updates of~\cite{DTJ10}, we do not have to add a constant to $V$. 
However, we observed that it allows Algorithm~\ref{cdsemi} to converge faster, and to better stationary points. 
In fact, taking $V$ as the binary cluster indicator matrix seems to induce some kind of locking phenomenon as Algorithm~\ref{cdsemi} has difficulties to get away from this initial point.} 0.2. This is the initialization from~\cite{DTJ10}, although we do not use their algorithm to update $U$ and $V$ because Algorithm~\ref{cdsemi} is numerically more stable and has much better convergence properties; see Remark~\ref{algopami}.

\item \emph{Algorithm~\ref{svdinit} (A2)}: we use Algorithm~\ref{svdinit} to initialize $V$. 
This initialization guarantees the error to be same as the error of the best rank-$(r-1)$ approximation. 

\item \emph{Algorithm~\ref{exactseminmf} (A3)}: we use Algorithm~\ref{exactseminmf} to initialize $V$. This initialization generates an optimal solution for matrices whose best rank-$r$ approximation contains a positive vector in its row space 
(which will be the case for example when $M$ is nonnegative and $M^TM$  irreducible). 

\end{enumerate} 
In the following two subsections, we generate several synthetic data sets where the dimensions of the input matrix $M$ are $m = 100$ and $n = 200$, and the factorization rank is $r = 20$ and $r = 80$. 
For each generated synthetic matrix, we run Algorithm~\ref{cdsemi} with the four different initializations and consider the error obtained after 10 and 100 iterations. 
We use the notation RD/10 (resp.\@  RD/100) 
to refer to the algorithm that performs 10 (resp.\@ 100) iterations of Algorithm~\ref{cdsemi} using RD as an initialization. 
We use the same notation for the three other initializations KM, A1 and A2, namely KM/10 and KM/100, A2/10 and A2/100, and A3/10 and A3/100. 
We also test the different initialization strategies on real data in Section~\ref{realdata}.  


In order to compare meaningfully the error of solutions obtained for different input matrices, we use the following measure: 
given a semi-NMF $(U,V)$ of $M$, 
\begin{equation} \label{normrelerr}
\text{quality}(U,V) \quad = \quad 100 \left( \frac{||M-UV||_F}{||M-X||_F} - 1 \right) \quad \geq \quad 0 , 
\end{equation}
where $X$ is the best rank-$r$ unconstrained approximation of $M$. 
It tell us how far away, in percent, the semi-NMF $UV$ is from the best unconstrained solution $X$. Note that 
$\text{quality}(U,V) = 0$ if and only if $UV$ matches the error of the best rank-$r$ approximation of $M$ if and only if the best rank-$r$ approximation of $M$ is semi-nonnegative.

The Matlab code is available at \url{https://sites.google.com/site/nicolasgillis/}.  All tests are preformed using Matlab on a laptop Intel CORE i5-3210M CPU @2.5GHz 2.5GHz 6Go RAM. We use the function \texttt{linprog} of Matlab to solve the linear systems within the bisection method implemented for Problem~\eqref{yepsi} (we have also implemented a version using CVX \cite{cvx, cvx2}  for users' convenience --note that the solution of~\eqref{yepsi} in $y$ is non-unique and hence the solutions generated by different solvers are usually different).

\subsection{Nonnegative and Semi-Nonnegative Matrices} 

In order to confirm our theoretical findings from Section~\ref{sec3}, 
namely that A3 computes 
optimal solutions for nonnegative matrices (given that $M^TM$ is irreducible; see Corollary~\ref{irreduc}), 
and for many semi-nonnegative matrices (under a certain condition; see Theorem~\ref{th4}), we generate matrices as follows: 
\begin{enumerate}
\item \emph{Nonnegative matrices}. We generate each entry of $M$ with the uniform distribution in the interval [0,1], that is, 
we use \texttt{M =  rand(m,n)}. Note that, with probability one, $M > 0$ hence $MM^T$ is irreducible. 

\item \textit{Semi-nonnegative matrices of rank higher than $r$}. We generate matrices for which $k = \rank(M) = \rank_s(M) = r+10$: we take $M = UV$ where each entry of $U$ is generated with the normal distribution (mean 0, variance 1) and each entry of $V$ with the uniform distribution in the interval $[0,1]$, that is, we use \texttt{M = randn(m,k)*rand(k,n)}.  
\end{enumerate}

For each value of $r$ (20, 80), we generate 500 such matrices and Figure~\ref{numexp1} displays the box plots of the measure defined in Equation~\eqref{normrelerr} (we perform a single initialization for each generated matrix). 
These results confirm that A3 performs perfectly for these types of matrices. 
Note that 
(i) A2 performs relatively poorly and lead to solutions worse than RD/100 and KM/100, and 
(ii) RD/100 (resp.\@ KM/100) do not always generate solutions close to optimality, in particular for semi-nonnegative matrices when $r=80$ for which the average quality is 5.5\% (resp.\@ 1.9\%). 
\begin{figure}[ht!]
\begin{center}
\begin{tabular}{cc}
\includegraphics[width=6cm]{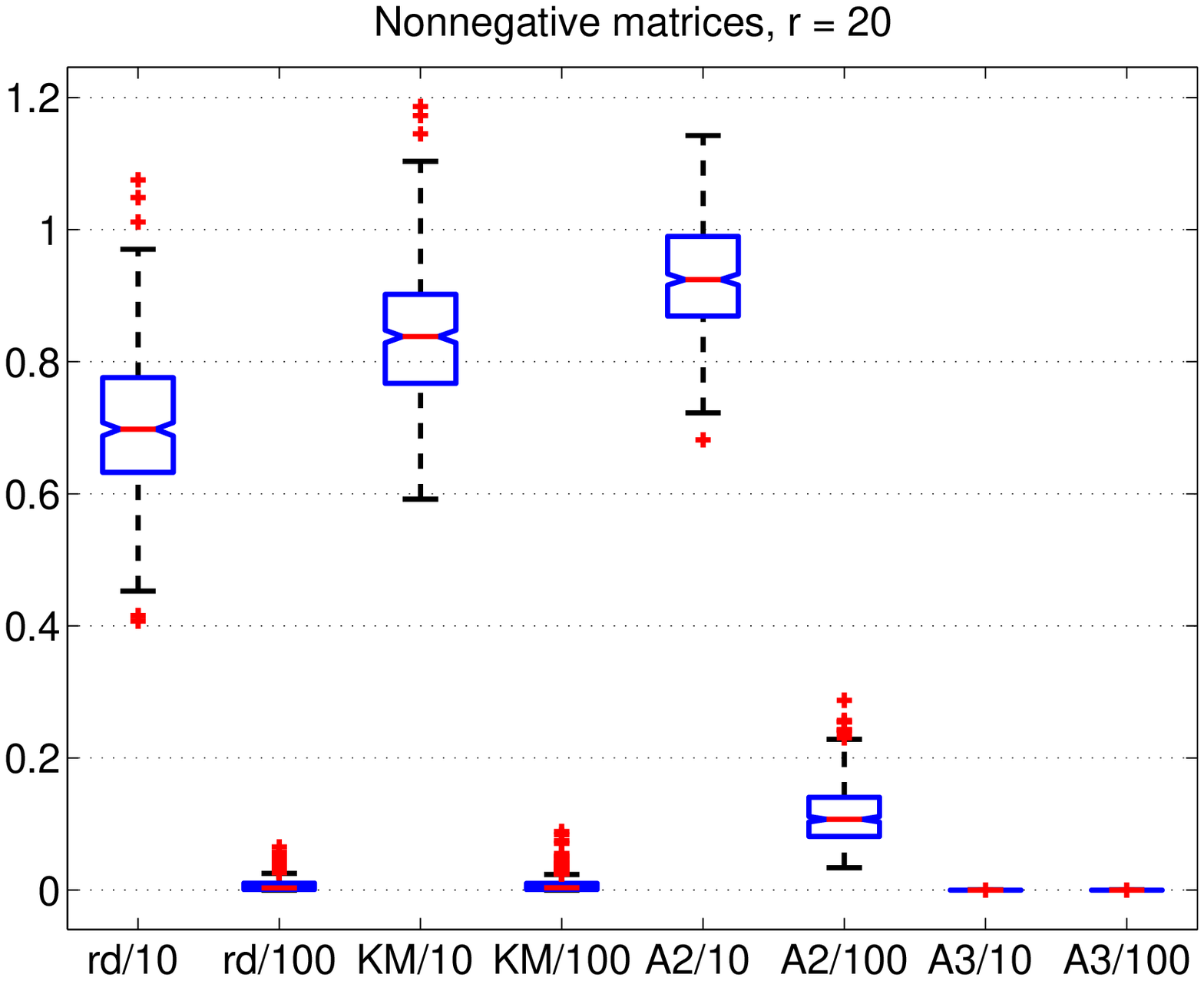} 
 \quad  & \quad
\includegraphics[width=6cm]{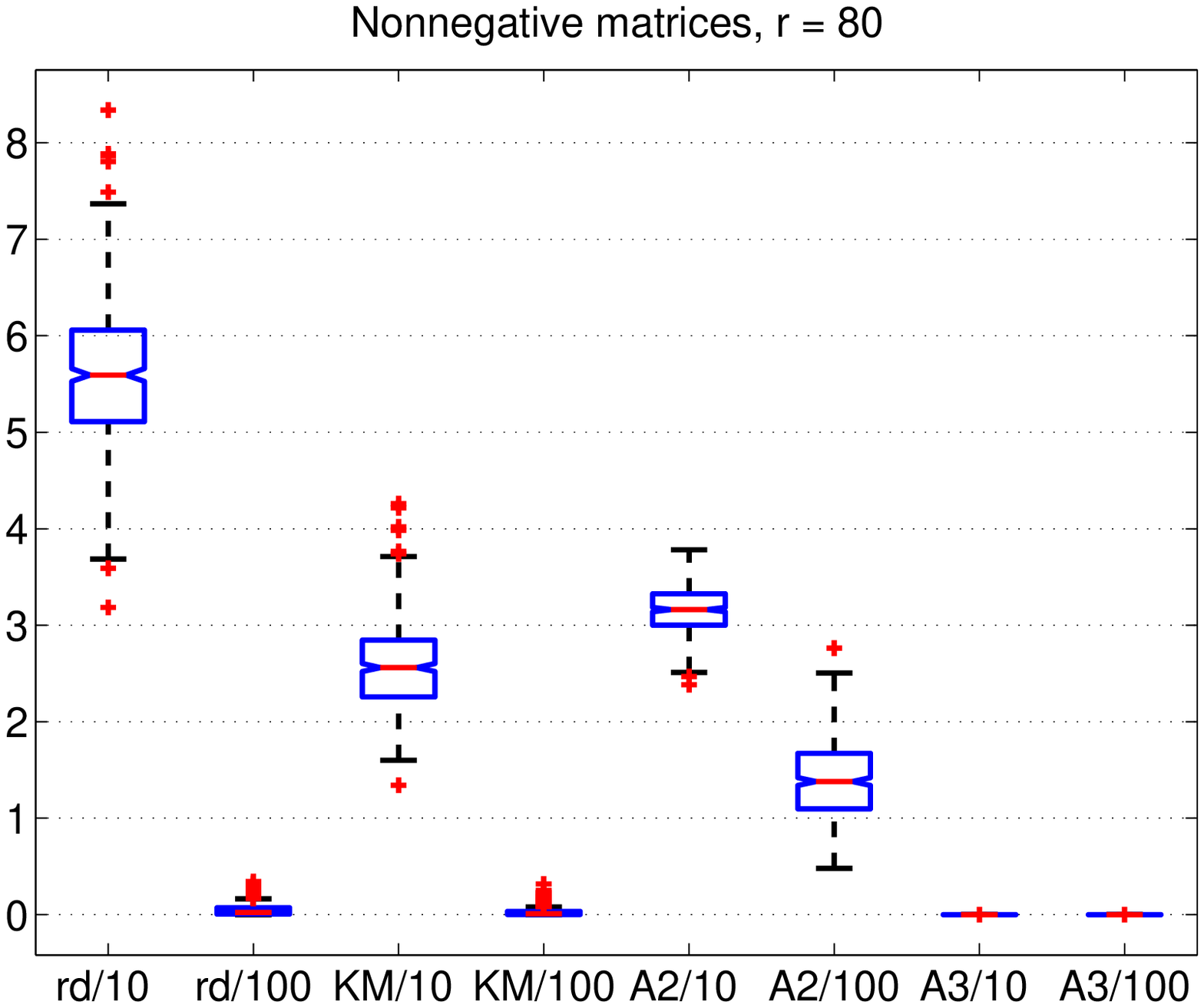} \\
 \includegraphics[width=6cm]{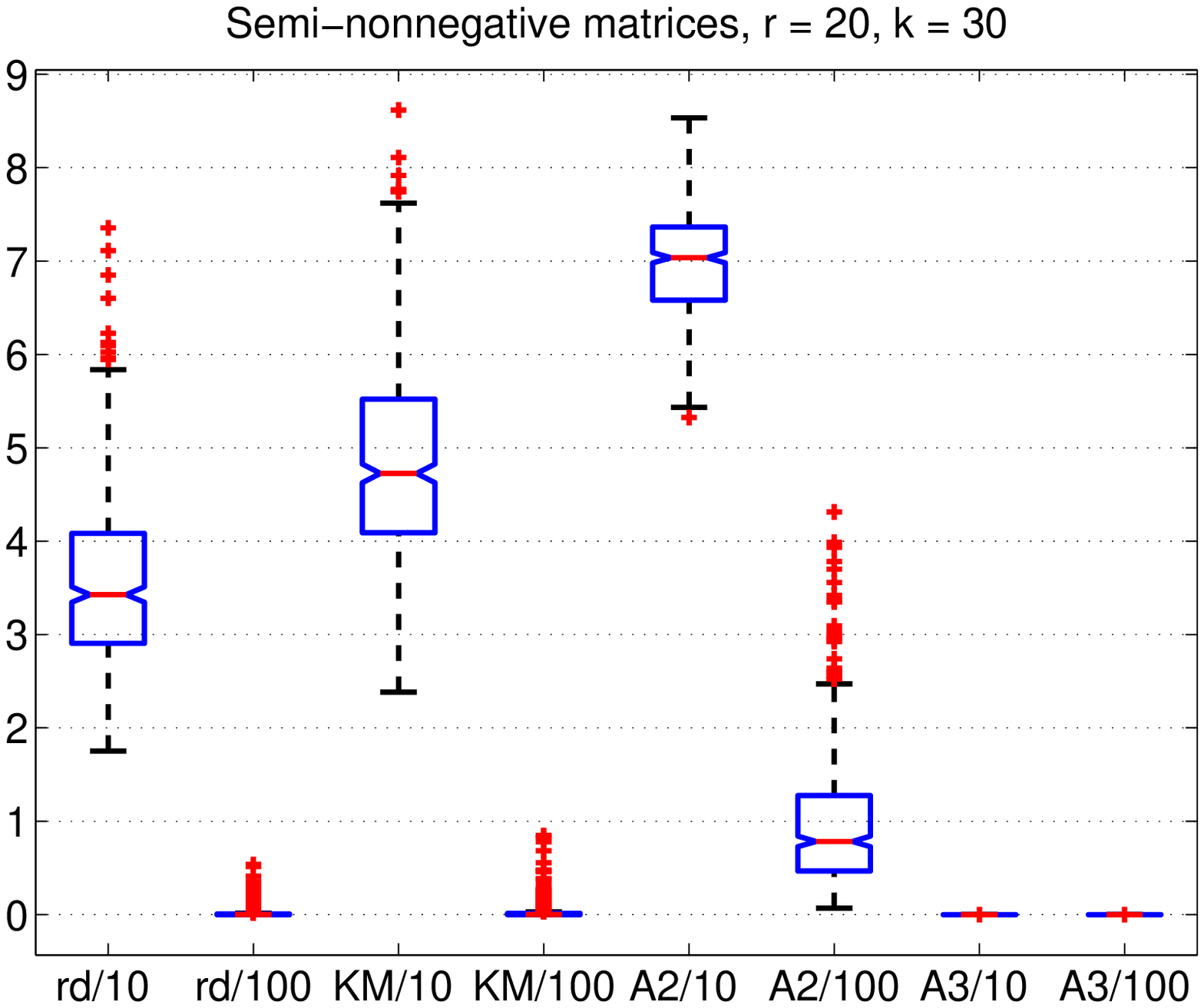}
 \quad  & \quad \includegraphics[width=6cm]{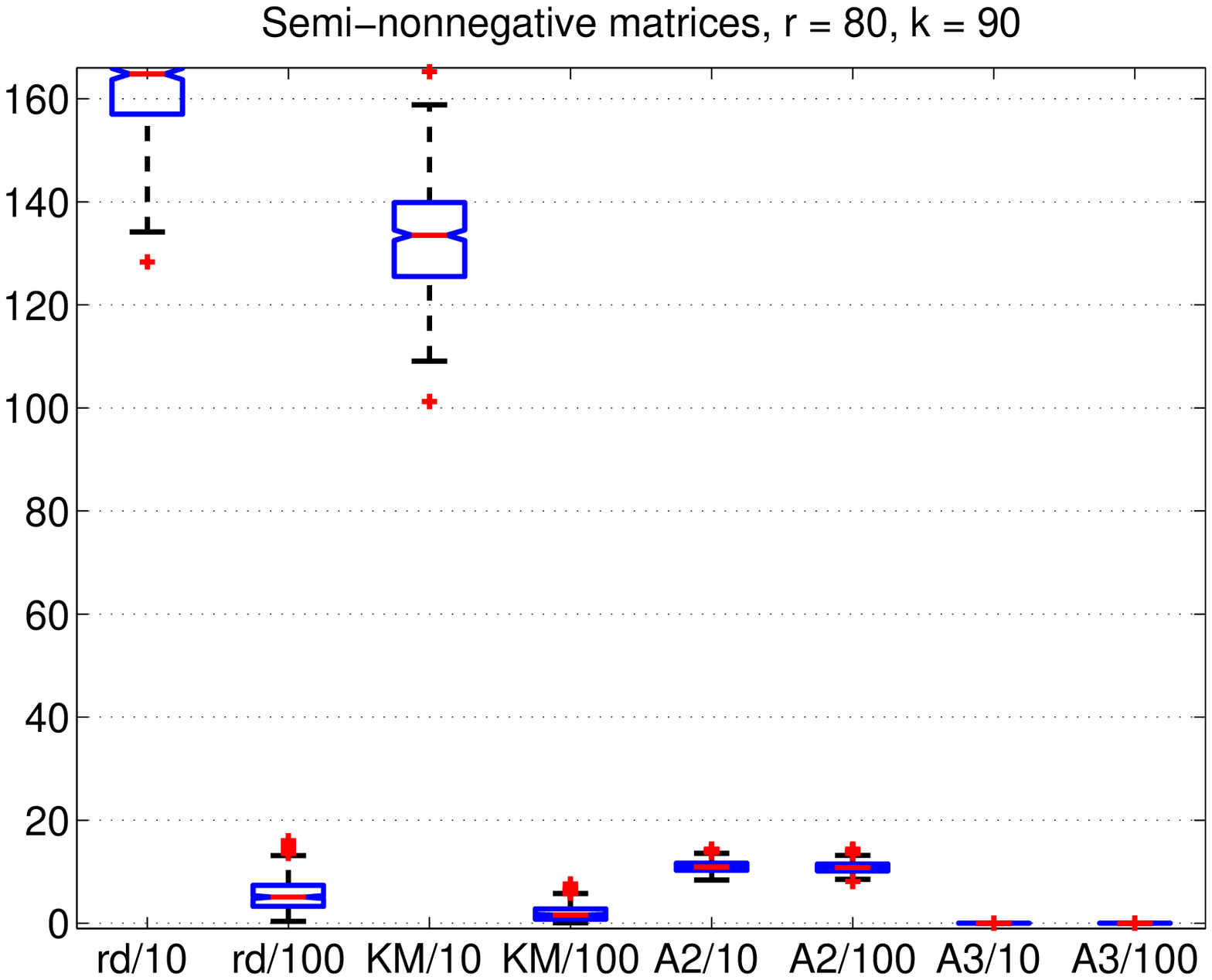}
\end{tabular}
\caption{Box plots of the error defined in Eq.~\eqref{normrelerr} for nonnegative and semi-nonnegative matrices.}
\label{numexp1}
\end{center}
\end{figure}

\subsection{Semi-Nonnegative Matrices plus Noise} 

In this subsection, we generate semi-nonnegative matrices with $\rank(M) = \rank_s(M) = r$ to which we add Gaussian noise. 
First we compute $M = UV$ where each entry of $U$ is generated with the normal distribution (mean 0, variance 1) and each entry of $V$ with the uniform distribution in the interval $[0,1]$, that is, we use \texttt{M =  randn(m,r)*rand(r,n)}, 
similarly as in the previous subsection.  
Then we compute the average of the absolute values of the entries of $M$: $x_M = \frac{1}{mn} \sum_{i,j} |M(i,j)|$ and add Gaussian noise proportional to $x_M$: 
we generate $N = \delta \, x_M \, \texttt{randn(m,n)}$ where $\delta$ is the noise level 
and then update $M \leftarrow M + N$. 
For $\delta = +\infty$, each entry of $M$ is generated using the normal distribution, that is, $\texttt{M = randn(m,n)}$. 
Figure~\ref{numexp2} display the box plots of the measure defined in Eq.~\eqref{normrelerr} for different values of $\delta$. As in the previous subsection, for each experiment, we generate 500 matrices and report the quality obtained for a single initialization for each algorithm. 
\begin{figure}[ht!]
\begin{center}
\begin{tabular}{cc}
\includegraphics[width=6cm]{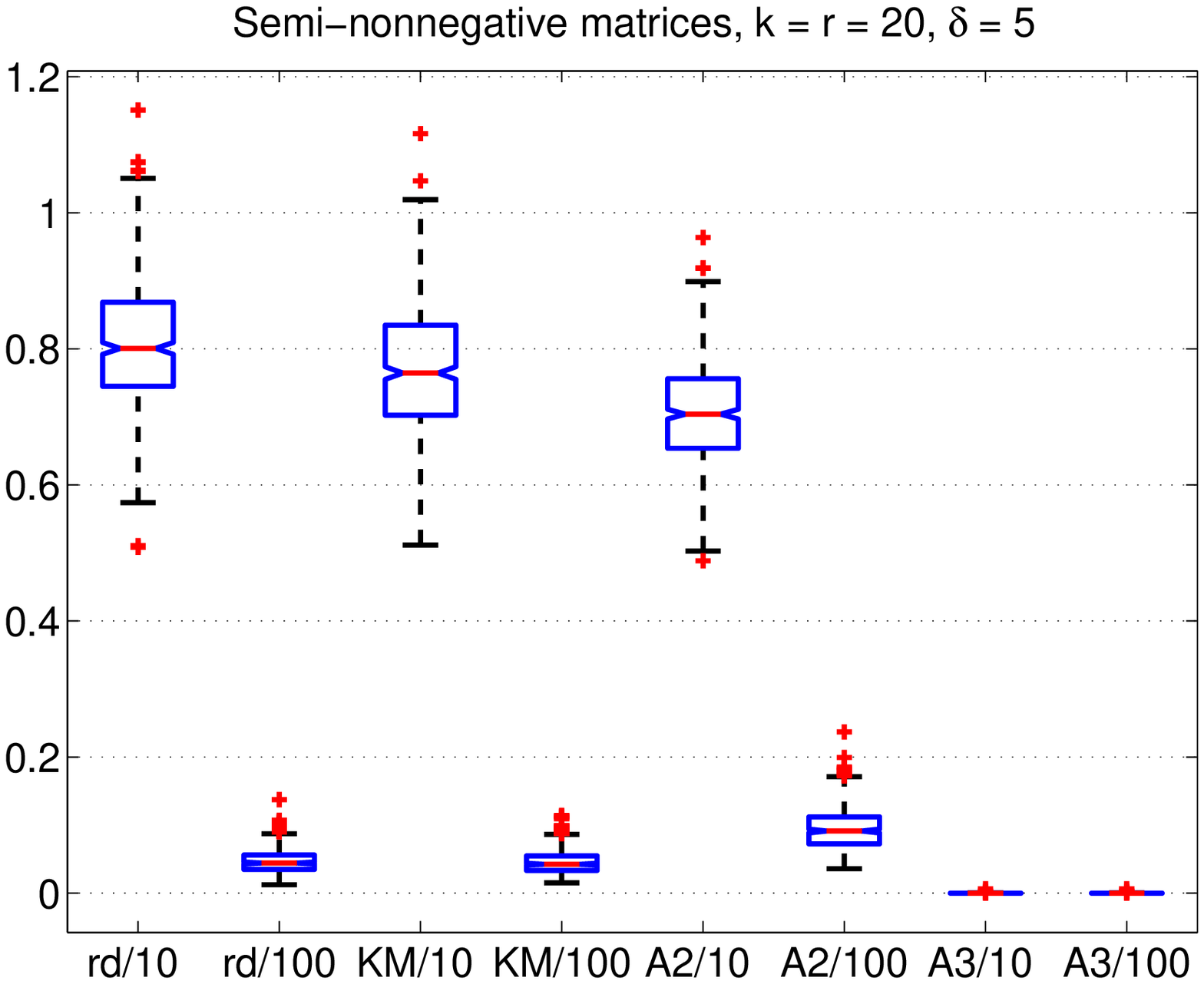} 
 \quad  & \quad
\includegraphics[width=6cm]{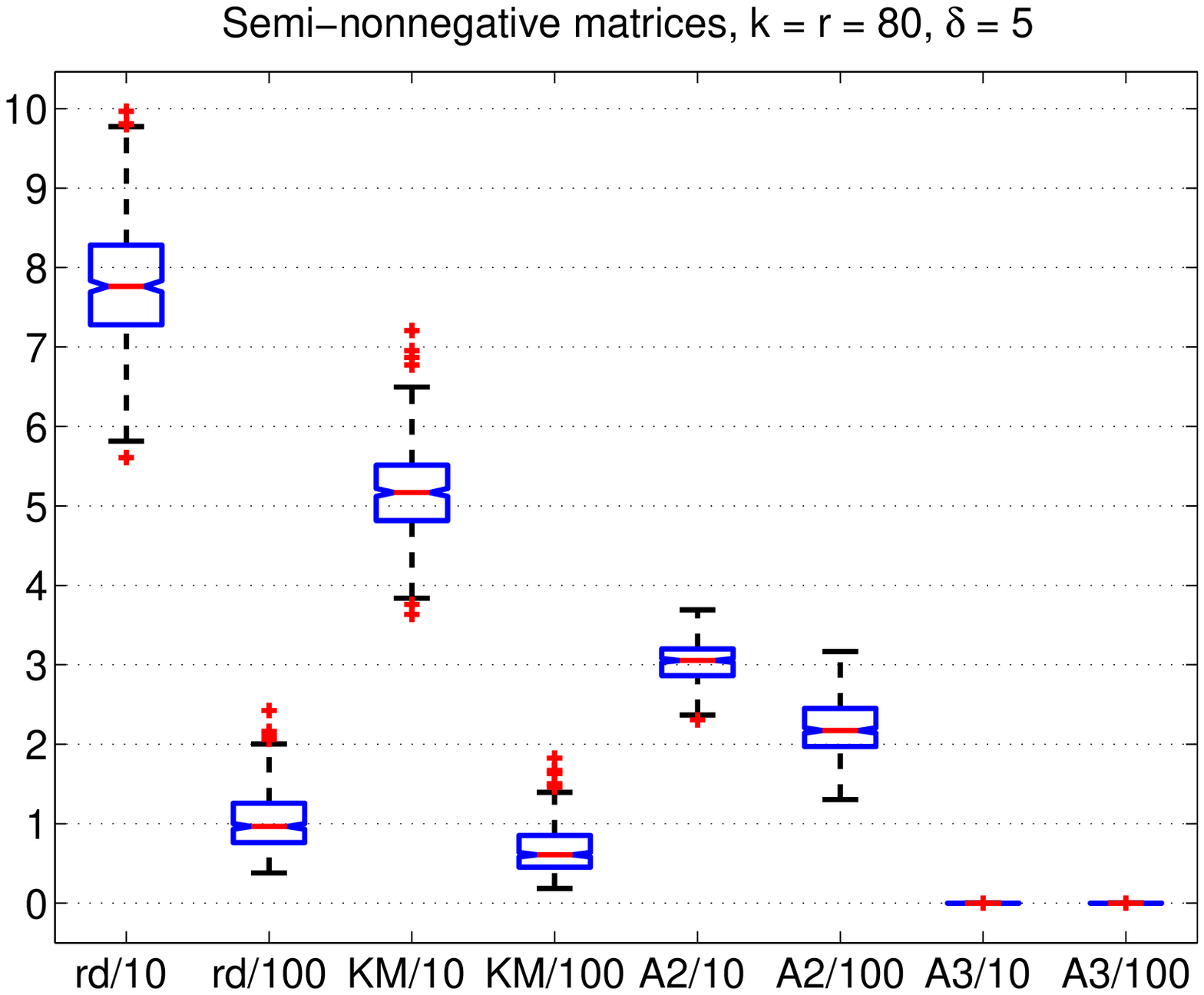} \\
\includegraphics[width=6cm]{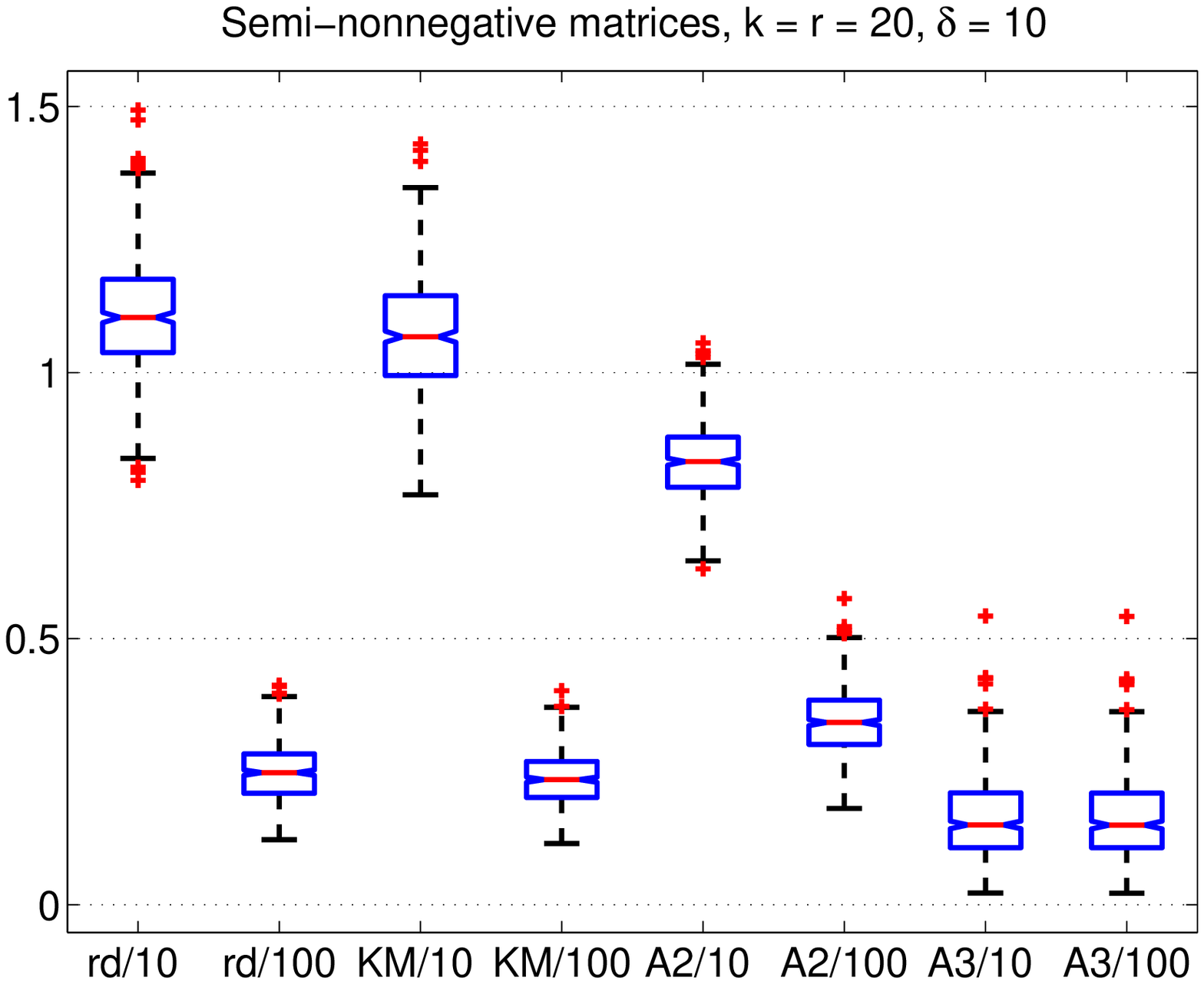} 
 \quad  & \quad
\includegraphics[width=6cm]{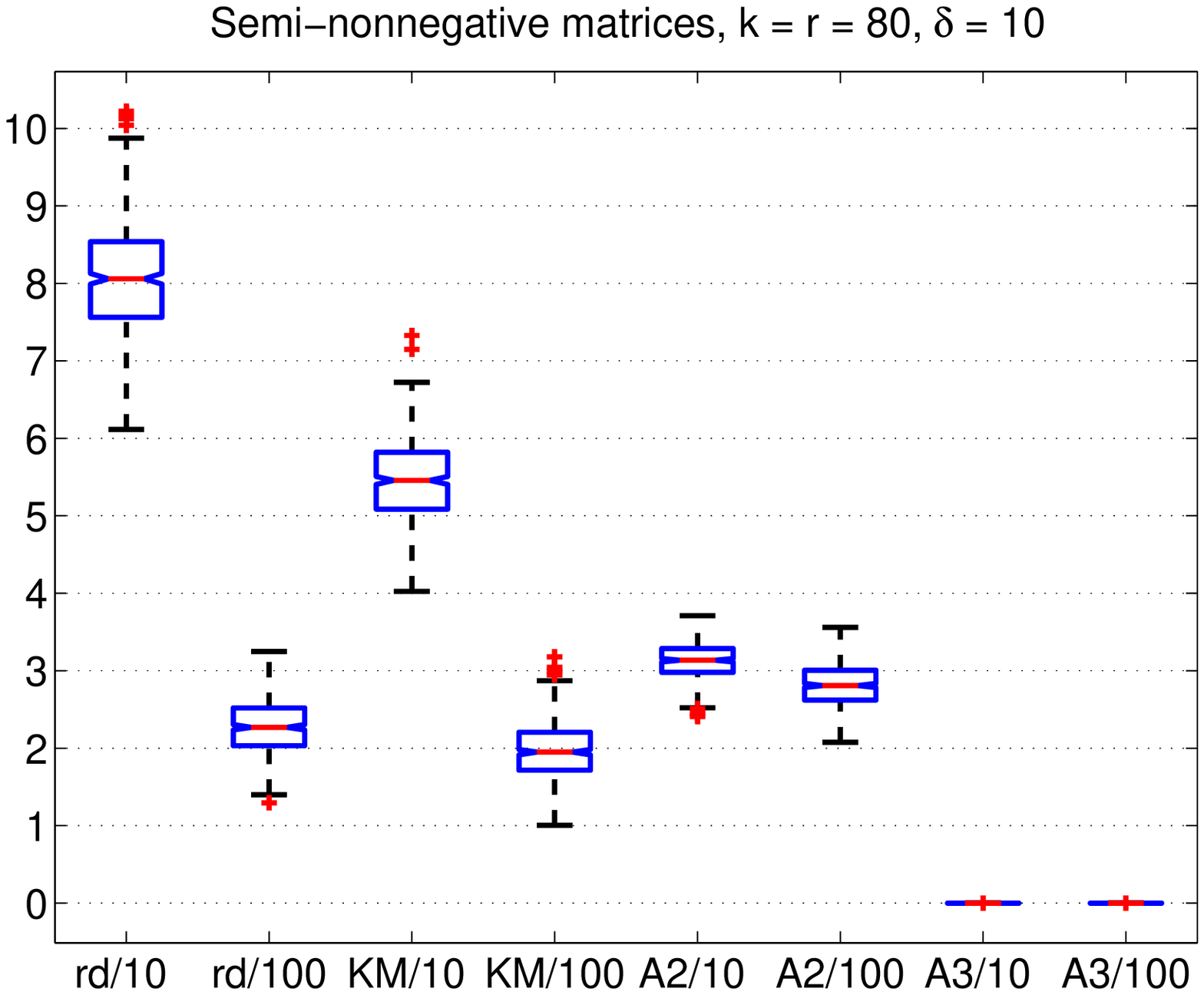} \\
 \includegraphics[width=6cm]{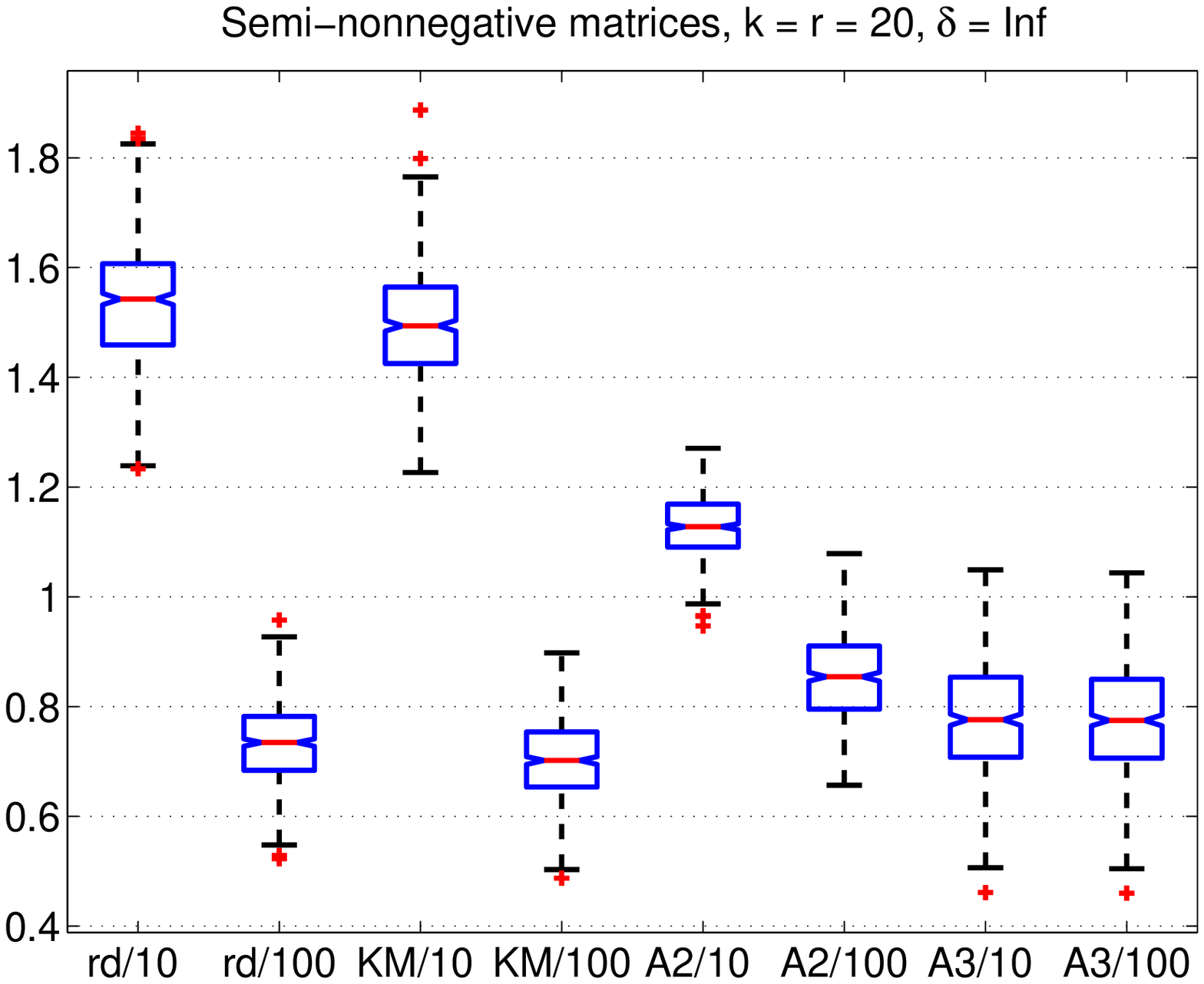}
 \quad  & \quad \includegraphics[width=6cm]{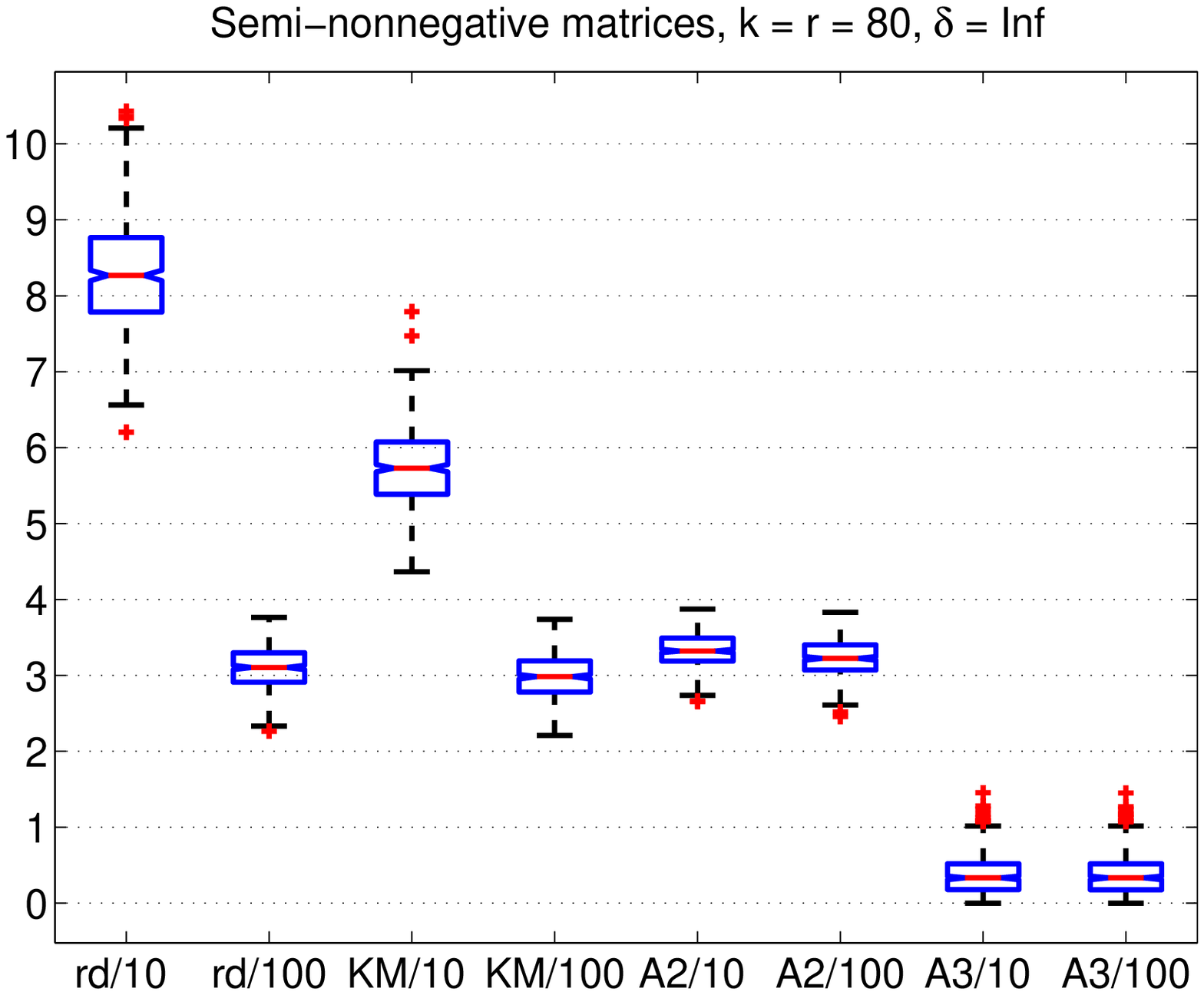} 
\end{tabular}
\caption{Box plots of the error from Eq.~\eqref{normrelerr}: 
semi-nonnegative matrices with noise ($\delta = 5, 10, +\infty$).}
\label{numexp2}
\end{center}
\end{figure} 
We observe that: 
\begin{itemize} 
 
\item A2 performs quite poorly: it is always dominated by RD/100 and KM/100. 
Hence, although A2 is appealing from a theoretical point of view, it does not seem to have much practical use.

\item KM performs in average slightly better than RD, hence k-means initialization seems beneficial in some cases (this will not be the case for some real data sets tested in Section~\ref{realdata}).

\item Even for a relatively large noise level (in particular, $\delta = 5$), A3 performs perfectly (all semi-NMF have quality~\eqref{normrelerr} smaller than $10^{-2}$) because the best rank-$r$ approximation of $M$ is close to being semi-nonnegative.  
The reason why A3 works perfectly even for very large noise levels can be explained with the way the matrix $V$ was generated: using the uniform distribution for each entry. This makes the columns of matrix $UV$ be far from the boundary of a well-chosen half space, that is, there exists $z$ such that $(UV)^Tz/||z||_2 \gg 0$. 
In particular, taking $z = (U^{\dagger})^T e$, the expected value of $(UV)^Tz = V^T e$ is equal to $\frac{r}{2} e$.  

\item When the noise level increases and the rank is not sufficiently large (the first example is for $\delta = 10$ and $r = 20$), the condition of Theorem~\ref{th4} is not (always) met and some solutions generated by A3 do not match the error of the best rank-$r$ approximation. 

\item When only noise is present and the matrix is Gaussian ($\delta = +\infty$), 
the condition of Theorem~\ref{th4} is never met and A3 fails most of the time to extract a semi-NMF whose error is close to the error of the best rank-$r$ approximation. This is not surprising as these matrices are not likely to have semi-nonnegative best rank-$r$ approximations.   
 
However, when $r$ is large ($r = 80$), A3 outperforms RD, KM and A2; see Figure~\ref{numexp2} (bottom right) and Figure~\ref{numexp3} (right). 
We believe the reason is that, although the best rank-$r$ approximations are not semi-nonnegative, they have a vector in their row space close enough to the nonnegative orthant hence A3 provides a good initial approximation. 

On the contrary, when $r$ small ($r = 20$), A3 performs worse than RD/100 and KM/100 although the gap between both approaches is not significant as shown in Figure~\ref{numexp2} (bottom left); see also Figure~\ref{numexp3}. 
(Note that, with $r = 10$, A3 performs even worse compared to RD/100 and KM/100; see also Section~\ref{realdata} for an example of this situation on real data.) 

Note that we have observed this behavior for other values of $m,n$ and $r$. 
As explained above, the reason is that it becomes more likely as $r$ increases for the row space of the best rank-$r$ approximation of $M$ to contain a vector close to the nonnegative orthant hence for A3 to perform well. (At least, the row space cannot get further away from the nonnegative orthant as it is expanded as $r$ increases.)   This will be conformed on real data in Section~\ref{realdata}.

\item It seems that A3 generates matrices close to stationary points of \eqref{semiNMF} as the difference between the boxplots of A3/10 and A3/100 is small on all these examples. This is another advantage of A3: 
in all the experiments we have performed, it always allowed Algorithm~\ref{cdsemi} to converge extremely quickly (essentially within 10 iterations); see Figure~\ref{numexp3} displaying the average value of the quality~\eqref{normrelerr} for each iteration on the 500 Gaussian matrices. This observation will be confirmed on real data in Section~\ref{realdata}. 
\begin{figure}[ht!]
\begin{center}
\begin{tabular}{cc}
\includegraphics[width=6.1cm]{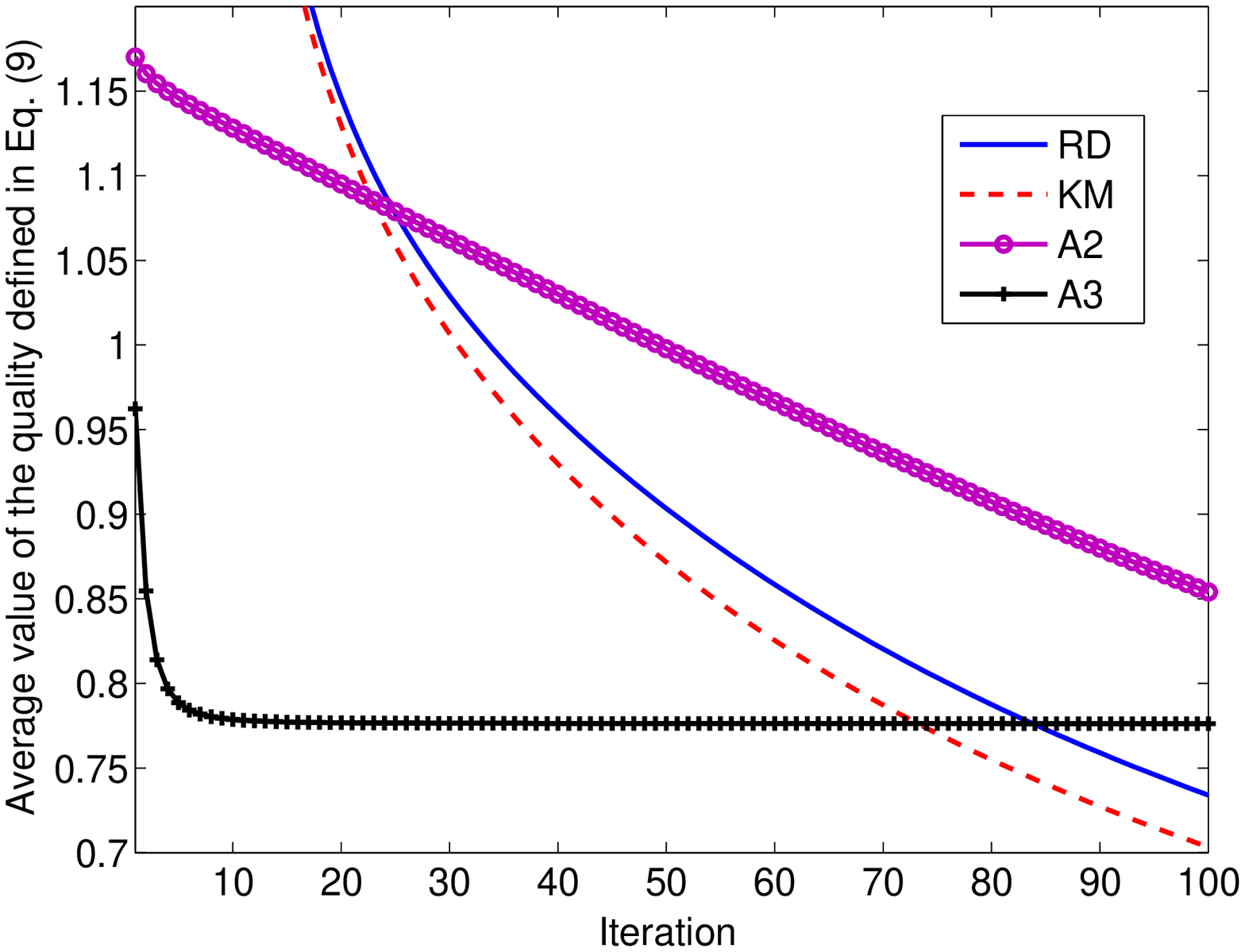} 
   & 
\includegraphics[width=6.2cm]{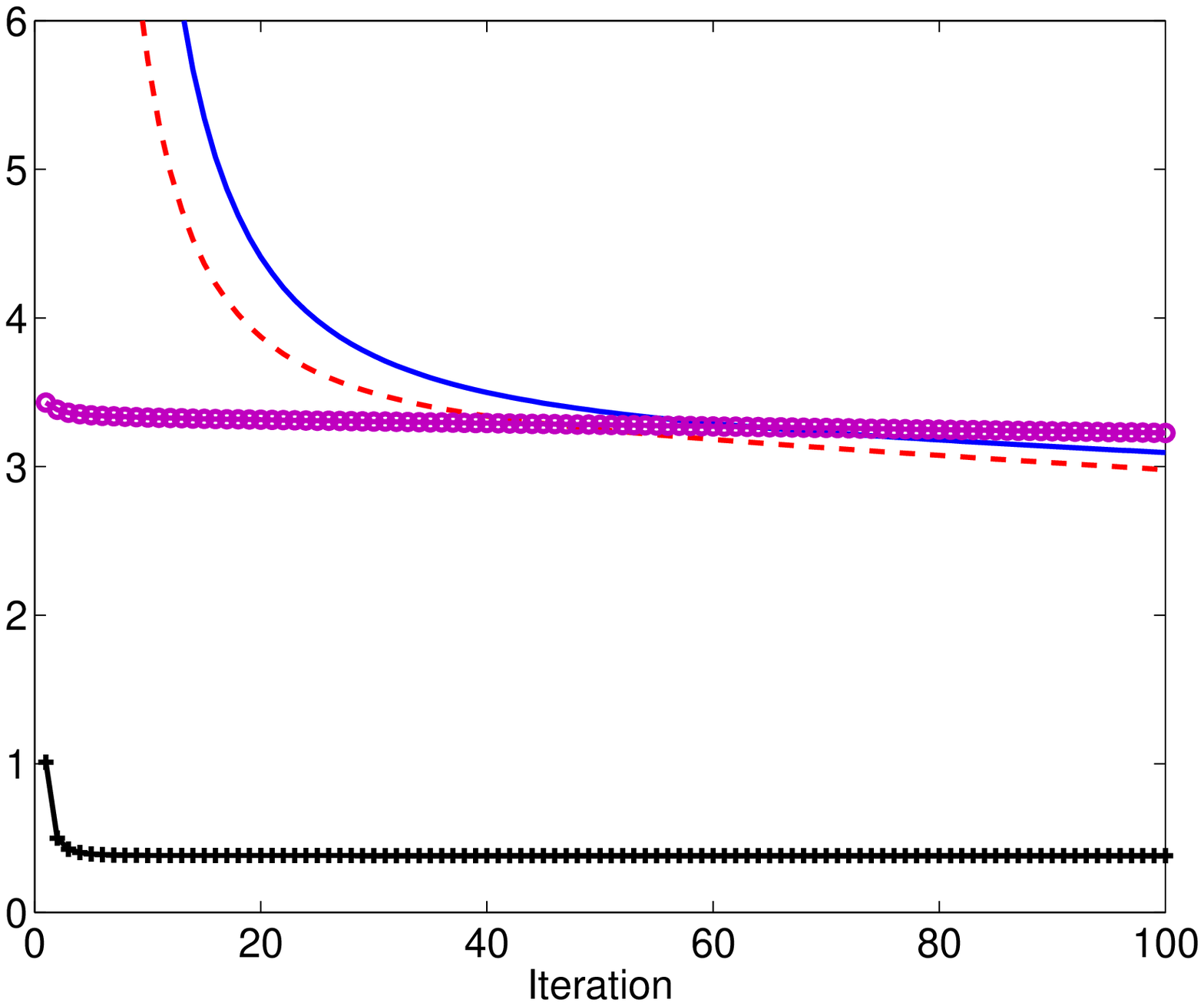} 
\end{tabular}
\caption{Average value of the quality~\eqref{normrelerr} for each iteration of Algorithm~\ref{cdsemi} on the 500 Gaussian matrices ($\delta = +\infty$) for $r = 20$ (left) and $r = 80$ (right) with the different initialization strategies.}
\label{numexp3}
\end{center}
\end{figure} 

\end{itemize}

Finally, the two main recommendations that we can give based on these experiments are the following: 
\begin{enumerate} 

\item A3 works in general very well, being optimal for matrices whose best rank-$r$ approximation is semi-nonnegative. 
Hence we would always recommend to try it on your favorite matrices. 

\item RD and KM sometimes work better than A3 (after sufficiently many iterations of a semi-NMF algorithm), 
in particular when the factorization rank $r$ is small and the best rank-$r$ approximation of the input matrix is far from being semi-nonnegative (in fact, if the best rank-$r$ approximation would be semi-nonnegative, A3 would perform perfectly). 
They should be tried on matrices for which A3 was not able to compute a semi-NMF whose error is close to the error of the best rank-$r$ approximation.   

\end{enumerate} 

\begin{remark}
In these experiments, we intentionally took $n \geq m$ because an $m$-by-$n$ matrix is less likely to be semi-nonnegative when $n \geq m$. 
In particular, an $m$-by-$n$ Gaussian matrix with $n \leq m$ is semi-nonnegative with probability one since $\rank(M) = \rank_s(M) = n$ with probability one (although this does not imply that its best rank-$r$ approximation is; see Theorem~\ref{th4}). 
For example, running exactly the same experiment with $m = 200, n = 100, r=20$ and $\delta = 10$, 
86\% of the solutions generated by A3 match the best rank-$r$ approximation up to $0.01\%$ 
(while only 0.4\% do for $m = 100, n = 200, r=20$; see Figure~\ref{numexp2}). 
\end{remark}

\subsection{Real Data} \label{realdata}

In this section, we compare the different approaches on three data sets: 
\begin{itemize}
\item CBCL face data set\footnote{\url{http://cbcl.mit.edu/cbcl/software-datasets/FaceData2.html}}: it is arguably the most popular data set for NMF as it was used in the foundational paper of Lee and Seung~\cite{LS99} with $r = 49$.  
It consists in 2429 facial images, 19-by-19 pixels each. 
The corresponding matrix $M$ therefore has size 361 by 2429 and is nonnegative. This will illustrate the optimality of A3 on nonnegative data. 

\item  Inonosphere and Waveform UCI data sets: 
these are the two data sets that contain both positive and negative entries used in~\cite{DTJ10}. 
Inonosphere corresponds to a 34-by-351 matrix with values in the interval [-1,1], Waveform to a 22-by-5000 matrix with values in the interval [-4.2,9.06] and we use $r= 3, 5, 10$ for both data sets; 
see \url{https://archive.ics.uci.edu/ml/datasets.html} for all the details. 
\end{itemize}

Our goal is to illustrate, on real data, the observations made on synthetic data sets. Again we compare the four semi-NMF initializations (RD, KM, A2 and A3) combined with Algorithm~\ref{cdsemi} in terms of the quality measure defined in Equation~\eqref{normrelerr}. 
For RD and KM, we use ten initializations (k-means generates in most cases different solutions for different runs) 
and report both the average quality and the best quality obtained by the different runs.  
Table~\ref{numreal} reports the error after 10 and 100 iterations of the four approaches (as before). 
\begin{center}
\begin{table}[h!]
\begin{center}
\caption{Numerical results on real-world data sets: quality~\eqref{normrelerr} of the different initialization approaches for semi-NMF. }
\label{numreal}  
\begin{tabular}{|c|c|c|c|c|c|c|c|}
\hline 
      & CBCL &  Ion 3 &  Ion 5 & Ion 10 & Wave 3 & Wave 5 & Wave 10  \\  \hline
 RD/10 - average & 17.59 & 0.59 & 1.90 & 3.12 & 0.19 & 2.43 & 3.44 \\
 RD/10 - best & 16.73 & 0.47 &  1.49 & 2.63 & 0.15 &  1.96 & 2.90 \\
 KM/10 - average & 27.44 & 0.54 & 2.11 & 4.22 & 0.23  & 5.40 & 14.87  \\
 KM/10 - best  & 26.13 & \textbf{0.41} & 1.99 & 2.68 & 0.23 & 5.24 &  13.88 \\
 A2/10  & 1.18 & 0.63 & 3.04 & 3.65 & 0.56 & 2.09 & 3.50 \\
 A3/10 &  \textbf{0} & 0.67 & \textbf{0.38} & \textbf{0} &  \textbf{0} & \textbf{0}  & \textbf{0}  \\  \hline  
 RD/100 - average &  1.59 & 0.16  & 0.44 & 0.37 & 0.01 & 0.03  & 0.07 \\
 RD/100 - best & 1.34 & \textbf{0.15} & \textbf{0.29} & 0.33 & 0.01 & 0.01 & 0.03  \\ 
 KM/100 - average & 6.00   & 0.16 & 0.98 & 0.44 & 0.01 & 0.09 &  0.13 \\
 KM/100 - best & 5.61 & \textbf{0.15} & 0.31 & 0.39 & 0.01 & 0.05& 0.06  \\  
 A2/100 & 1.18 & 0.16 & 0.99 & 1.57 & 0.02 & 0.14 &  0.25 \\  
 A3/100  &  \textbf{0} & 0.67 & 0.38 & \textbf{0} &  \textbf{0} & \textbf{0} &  \textbf{0} \\
\hline
\end{tabular} 
\end{center}
\end{table}
\end{center}
 
Figure~\ref{numion} shows the evolution of the quality for the CBCL data set, 
Figure~\ref{numion} for the Ionosphere data set,  and 
Figure~\ref{numwave} for the Waveform data set. 
\begin{figure}[ht!]
\begin{center}
\begin{tabular}{c}
\includegraphics[width=7cm]{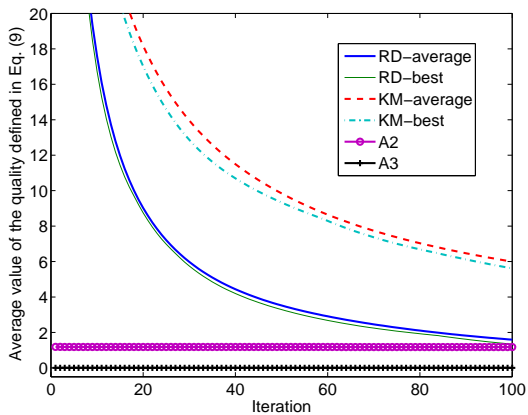} 
\end{tabular}
\caption{Quality~\eqref{normrelerr} for each iteration of Algorithm~\ref{cdsemi} with the different initialization strategies on the CBCL data set ($r = 49$).} 
\label{numcbcl}
\end{center}
\end{figure} 
\begin{figure}[ht!]
\begin{center}
\begin{tabular}{c}
\includegraphics[width=6.1cm]{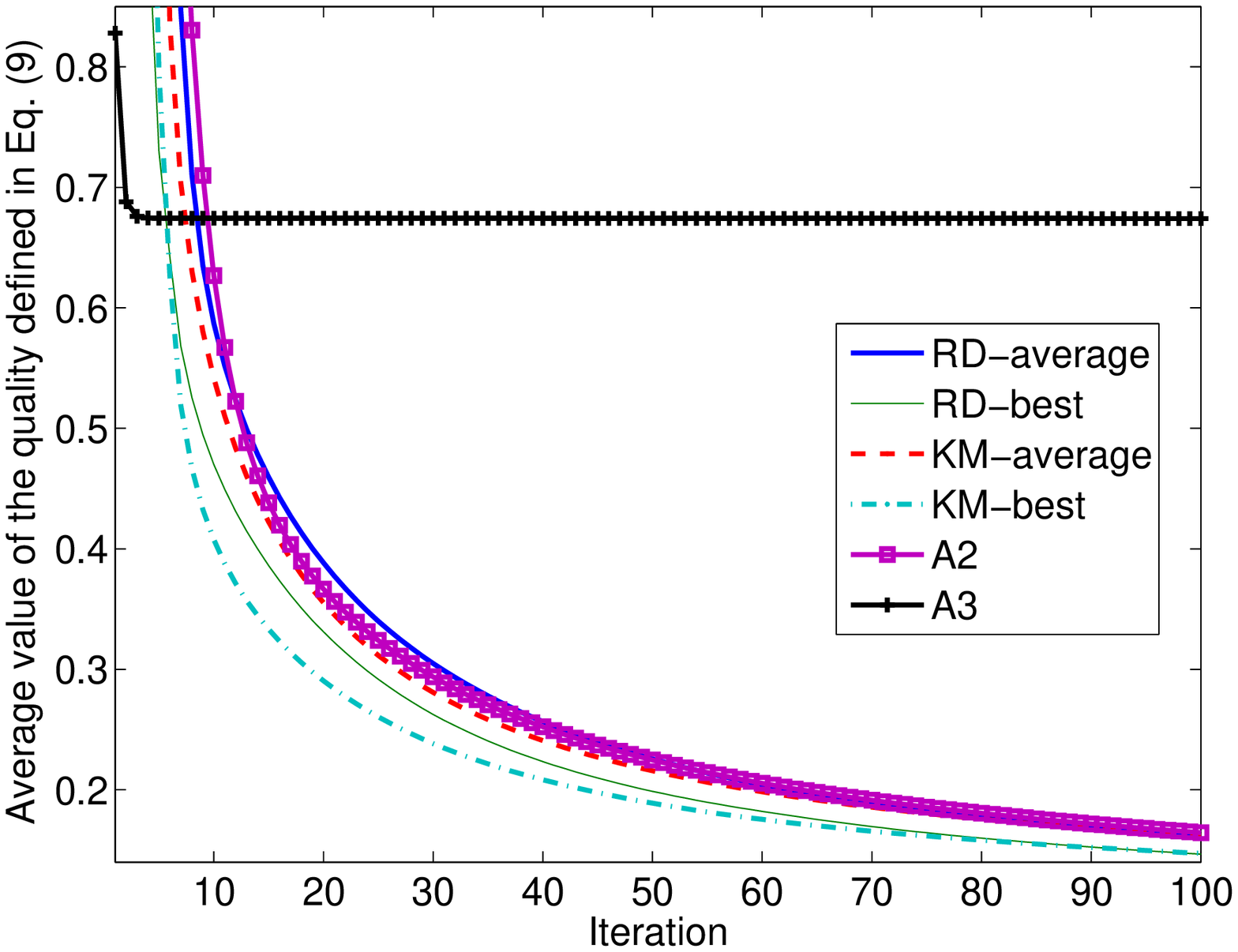} \\ 
\includegraphics[width=6.1cm]{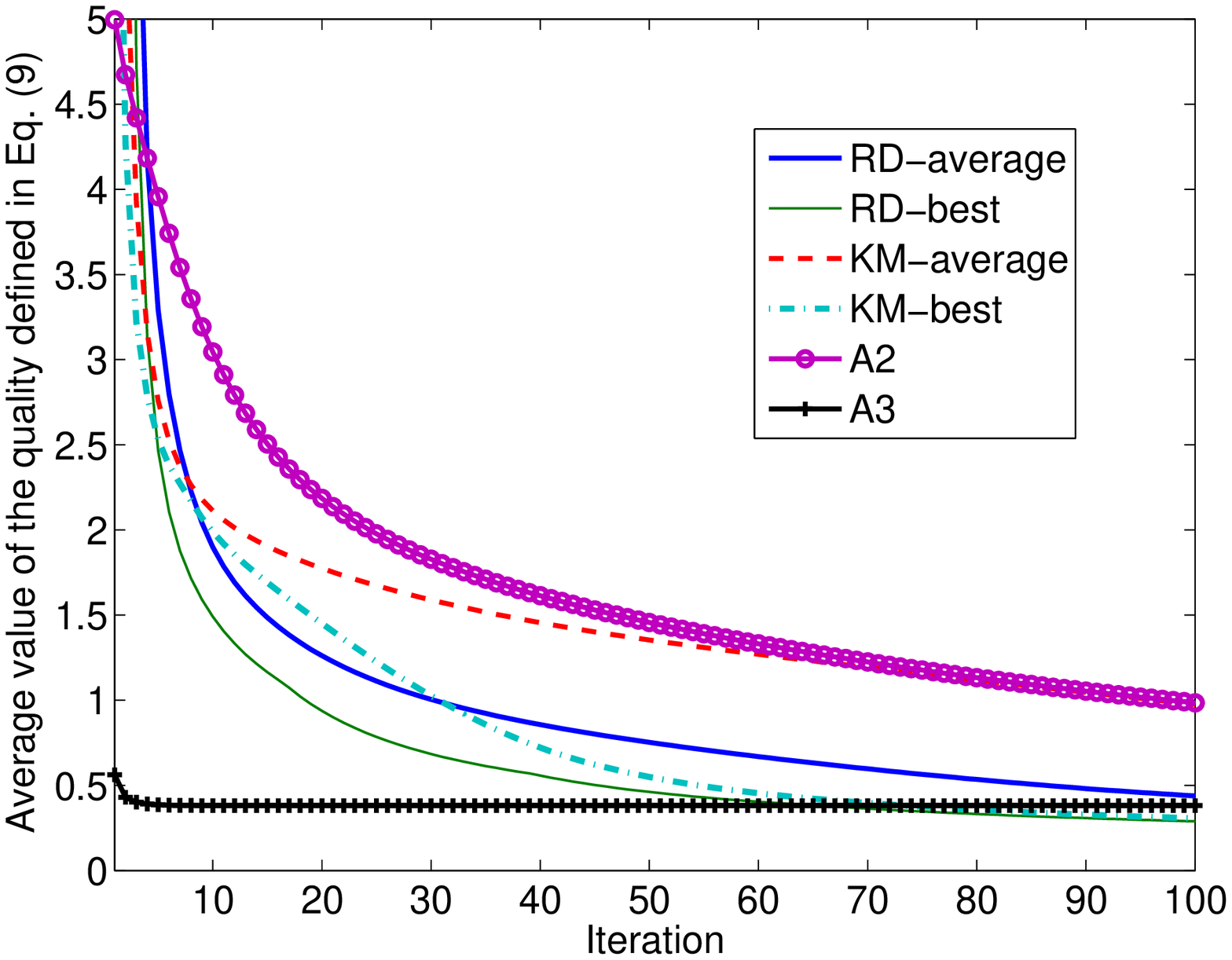} 
\includegraphics[width=6.3cm]{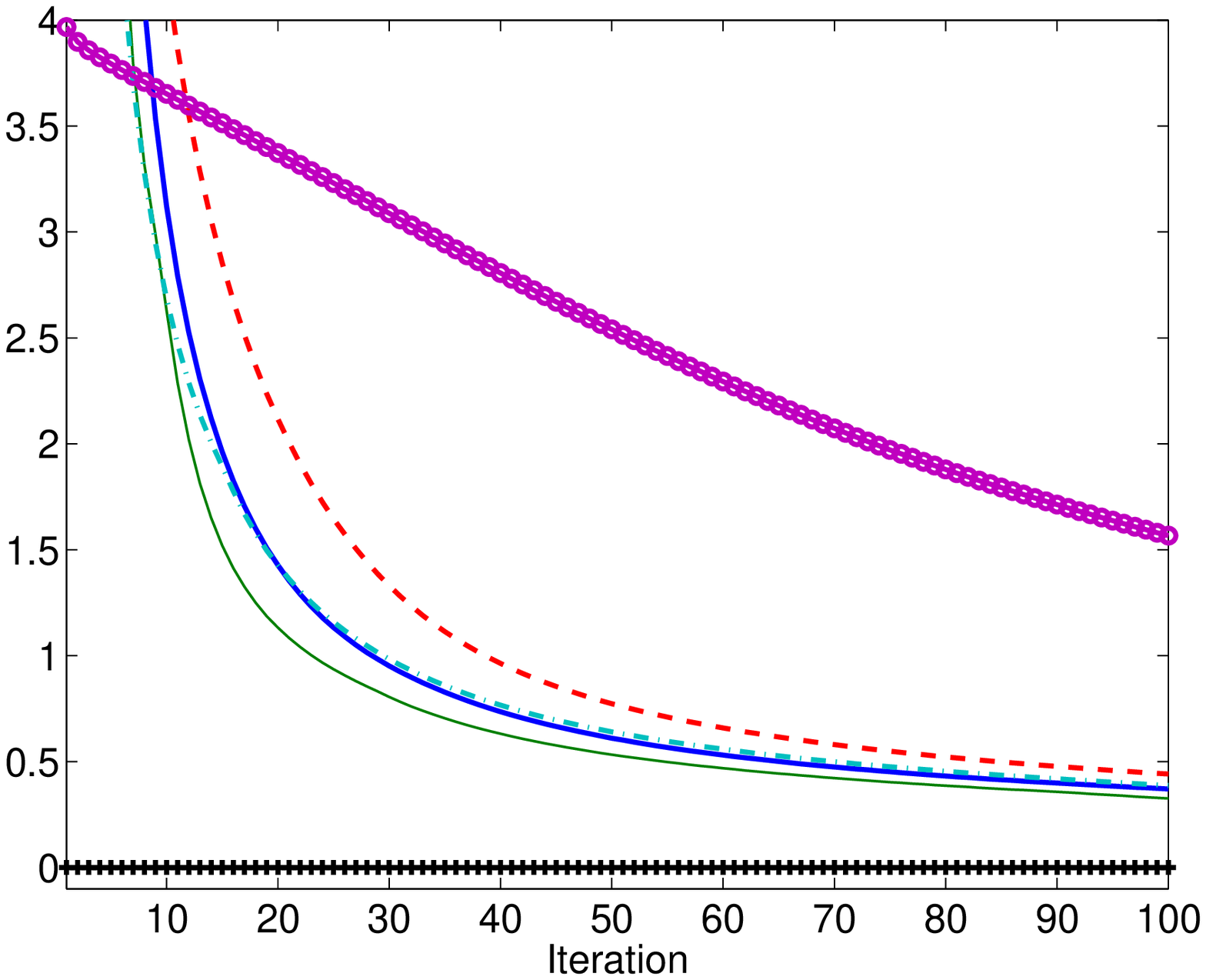} 
\end{tabular}
\caption{Quality~\eqref{normrelerr} for each iteration of Algorithm~\ref{cdsemi} with the different initialization strategies for the Ionosphere data set: $r = 3$ (top), $r = 5$ (bottom left) and $r = 10$ (bottom right).}
\label{numion}
\end{center}
\end{figure} 
\begin{figure}[ht!]
\begin{center}
\begin{tabular}{c}
\includegraphics[width=6.3cm]{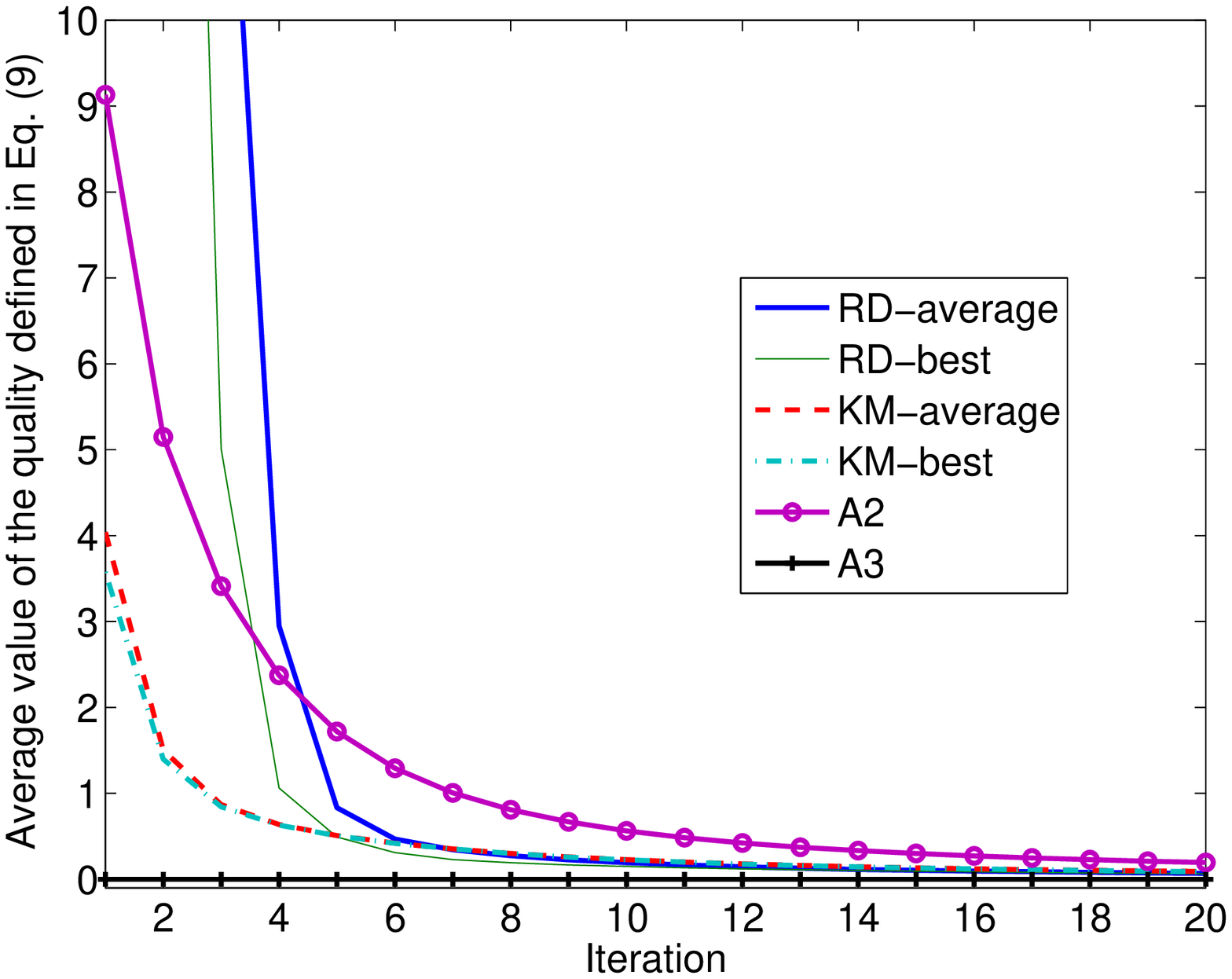} \\ 
\includegraphics[width=6.3cm]{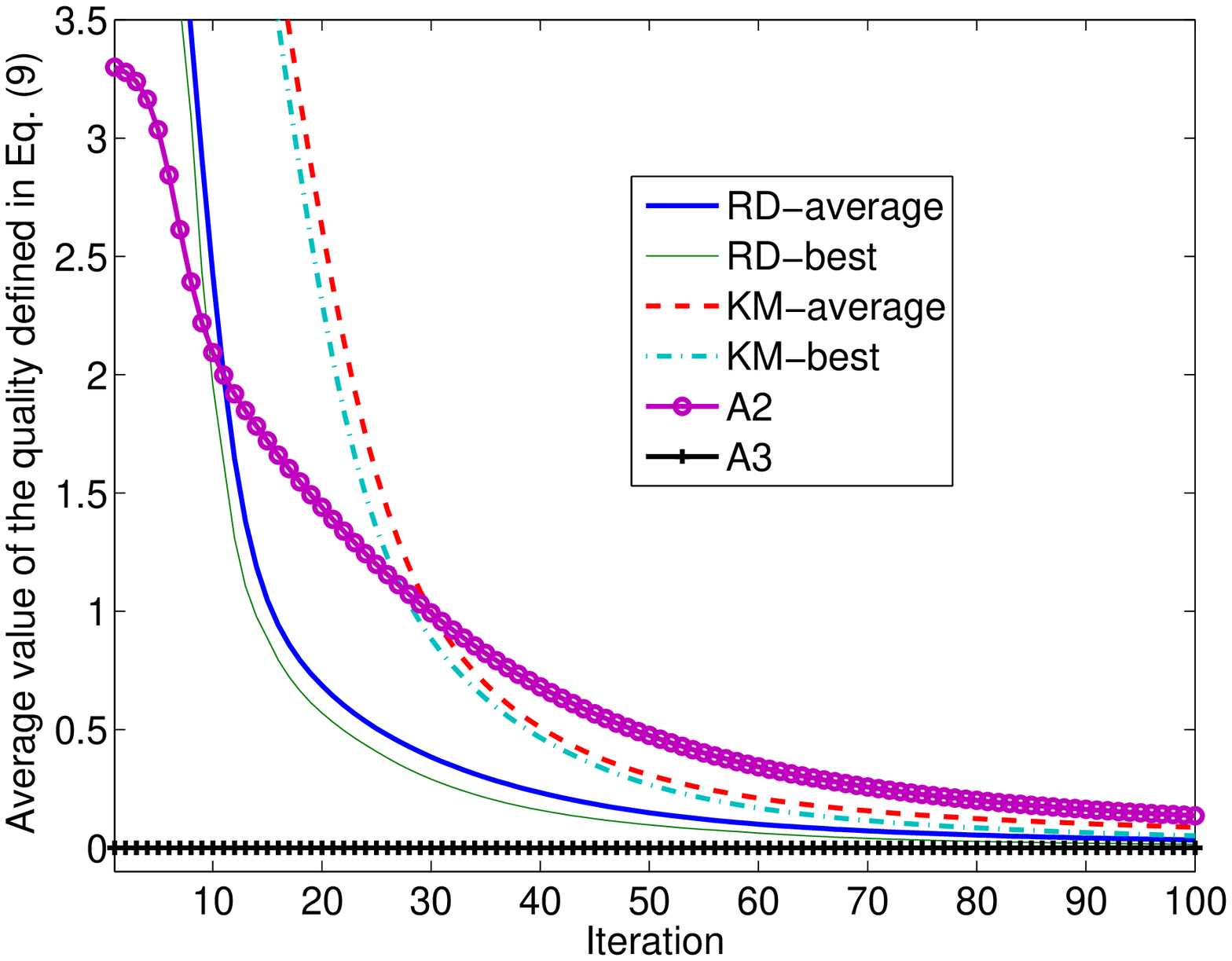} 
\includegraphics[width=6cm]{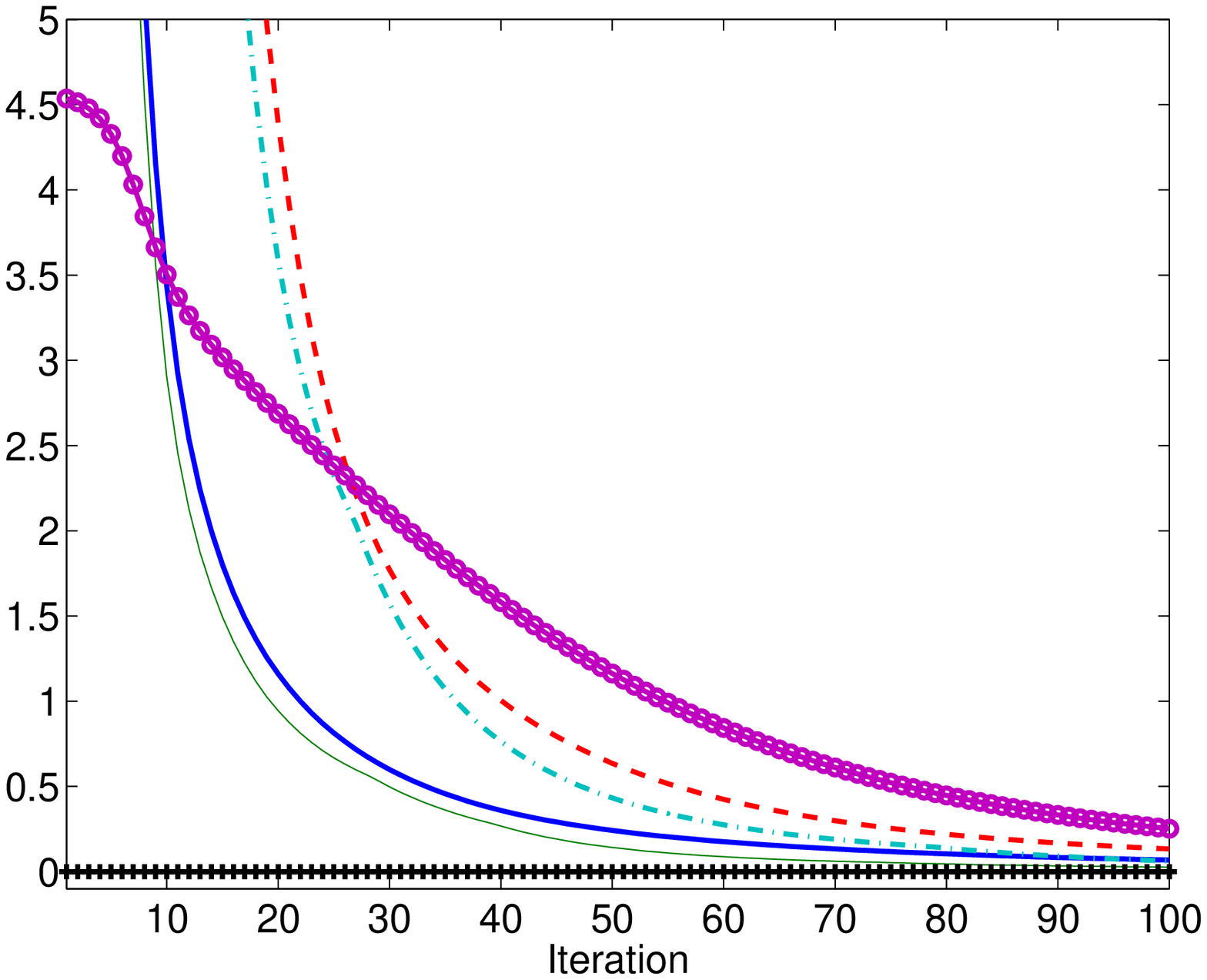} 
\end{tabular}
\caption{Quality~\eqref{normrelerr} for each iteration of Algorithm~\ref{cdsemi} with the different initialization strategies for the Waveform data set: $r = 3$ (top), $r = 5$ (bottom left) and $r = 10$ (bottom right).} 
\label{numwave}
\end{center}
\end{figure}

Interestingly, these results confirm the observations on synthetic data sets: 
\begin{itemize}

\item For the CBCL data set (a nonnegative matrix), A3 identifies an optimal solution while the other approaches are not able to (although performing more iterations of Algorithm~\ref{cdsemi} would improve their solutions; see Figure~\ref{numcbcl}). 

\item For the Ionosphere data set, when $r$ is small ($r = 3$), A3 is not able to identify a good initial point and perform the worse (as for Gaussian matrices with $r = 20$; see Figure~\ref{numexp3}). 
When $r$ is large ($r = 10$), A3 is again the only approach that leads to an optimal solution matching the error of the best rank-$r$ approximation. For $r = 5$, it does not perform best, but allows to obtain a rather good initial point (A3/10 performs best). 

\item For the Wave data set, A3 is always able to identify an optimal solution, because its best rank-$r$ approximation is semi-nonnegative (recall that if it is semi-nonnegative for some $r$, it is for all $r' \geq r$; see Section~\ref{snm}). 

\item A3 allows Algorithm~\ref{cdsemi} to converge very quickly, in all cases in less than 10 iterations. 

\end{itemize}

It is interesting to note that, for these experiments, RD performs better than KM although the difference is not significant (except for the CBCL face data set).

\section{Conclusion} 

In this paper, we have addressed theoretical questions related to semi-NMF that led us to the design of exact and heuristic algorithms. 
Our contribution is three-fold. We showed that 
\begin{itemize}
\item The approximation error of semi-NMF of rank $r$ has to be smaller than the approximation error of its unconstrained counterpart of rank $r-1$. This result allowed us to design a new initialization procedure for semi-NMF that guarantees the error to be equal to the error of the best rank-$(r-1)$ approximation; see Theorem~\ref{th1} Algorithm~\ref{svdinit}.  
However, it seems that this initialization procedure does not work very well in practice. 

\item Exact semi-NMF can be solved in polynomial time (Theorem~\ref{th3}), and semi-NMF of a matrix $M$ can be solved in polynomial time up to any given precision with Algorithm~\ref{exactseminmf} given that the best rank-$r$ approximation of $M$ is semi-nonnegative. 
Algorithm~\ref{exactseminmf} can also handle cases when the aforementioned condition is not met, and we illustrated its effectiveness on several synthetic data sets. 

\item Semi-NMF is NP-hard in general, already in the rank-one case (Theorem~\ref{th5}). Moreover, we showed that some semi-NMF instances are ill-posed (that is, an optimal solution does not exist).  

\end{itemize} 

Further research on semi-NMF includes the design of other initialization strategies 
(in particular in the case $r$ is small and the best rank-$r$ approximation of $M$ is far from being semi-nonnegative; see also Remark~\ref{remeps}),  and the analysis of constrained variants of semi-NMF such as sparse semi-NMF where $V$ is required to be sparse. 
In fact, sparse semi-NMF would in general make more sense as it has better clustering properties; 
see the discussions in Section~\ref{secnm} and in \cite{DTJ10} for more details.   
In particular, it would be interesting to see how Algorithm~\ref{exactseminmf} performs as an initialization strategy in that case.

 \section*{Acknowledgments}
 
The authors would like to thank the reviewers for their insightful comments which helped improve the paper.

\small 

\bibliographystyle{spmpsci}
\bibliography{Biography}

\end{document}